\newtheorem{theorem}{Theorem}[]
\newtheorem*{theorem*}{Theorem}
\newtheorem{corollary}[theorem]{Corollary}
\newtheorem{lemma}[theorem]{Lemma}
\newtheorem{proposition}[theorem]{Proposition}
\newtheorem*{claim*}{Claim}
\theoremstyle{definition}
\newtheorem{definition}[theorem]{Definition}
\newtheorem*{definition*}{Definition}
\theoremstyle{AppDefinition}
\theoremstyle{AppClaim}
\theoremstyle{remark}
\newtheorem{remark}[theorem]{Remark}
\newtheorem{example}[theorem]{Example}
\newtheorem*{example*}{Example}
\def\beginmat{ \left( \begin{array} }
\def\endmat{ \end{array} \right) }
\newcommand*{\op}{%
  \DOTSB
  \mathop{\vphantom{\bigoplus}\mathpalette\matt@op\relax}%
  \slimits@
}
\newcommand\matt@op[2]{%
  \vcenter{\m@th\hbox{\resizebox{\widthof{$#1\bigoplus$}}{!}{$\boxplus$}}}%
}
\newcommand{\one}{{\bf{1}}}
\def\R{{\mathbb R}}
\newcommand{\dtr}{d_{\mathrm{tr}}}
\def\@biblabel#1{}
\@citea\NAT@hyper@{%
     \NAT@nmfmt{\NAT@nm}%
     \hyper@natlinkbreak{\NAT@aysep\NAT@spacechar}{\@citeb\@extra@b@citeb}%
     \NAT@date}}
\@citea\NAT@nmfmt{\NAT@nm}%
\NAT@spacechar\NAT@hyper@{\NAT@date}}{}{}
\@citea\NAT@hyper@{%
     \NAT@nmfmt{\NAT@nm}%
     \hyper@natlinkbreak{\NAT@spacechar\NAT@@open\if*#1*\else#1\NAT@spacechar\fi}%
       {\@citeb\@extra@b@citeb}%
     \NAT@date}}
\@citea\NAT@nmfmt{\NAT@nm}%
\fi\NAT@hyper@{\NAT@date}}
\begin{document}
\def\spacingset#1{\renewcommand{\baselinestretch}%
{#1}\small\normalsize} \spacingset{1}
\begin{flushleft}
{\Large{\textbf{An Invitation to Tropical Alexandrov Curvature}}}
\newline
\\
{\textbf{Dedicated to Bernd Sturmfels on the occasion of his 60th birthday}}
\newline
\\
Carlos Am\'{e}ndola$^{1,\dagger}$ and Anthea Monod$^{2}$
\\
\bigskip
\bf{1} Institute of Mathematics, Technical University of Berlin, Germany
\\
\bf{2} Department of Mathematics, Imperial College London, UK
\\
\bigskip
$\dagger$ Corresponding e-mail: amendola@math.tu-berlin.de
\end{flushleft}


\section*{Abstract}

We study Alexandrov curvature in the tropical projective torus with respect to the tropical metric, which has been useful in various statistical analyses, particularly in phylogenomics. Alexandrov curvature is a generalization of classical Riemannian sectional curvature to more general metric spaces; it is determined by a comparison of triangles in an arbitrary metric space to corresponding triangles in Euclidean space.  In the polyhedral setting of tropical geometry, triangles are a combinatorial object, which adds a combinatorial dimension to our analysis. We study the effect that the triangle types have on curvature, and what can be revealed about these types from the curvature. We find that positive, negative, zero, and undefined Alexandrov curvature can exist concurrently in tropical settings and that there is a tight connection between triangle combinatorial type and curvature. Our results are established both by proof and computational experiments, and shed light on the intricate geometry of the tropical projective torus. In this context, we discuss implications for statistical methodologies which admit inherent geometric interpretations.



\section{Introduction}
\label{sec:intro}

Several fundamental classical statistical and machine learning methodologies have geometric interpretations, such as linear regression and $k$-means clustering.  The main idea is that underlying geometric features of data spaces are relevant for building and interpreting a model. \emph{Curvature} is one such classical concept in mathematics that has important implications in computational studies and real data applications. 

Likewise, \emph{tropical geometry} is a field that started as a combinatorial shadow of algebraic geometry, studying sets of systems of polynomial equations defined in the tropical semiring $(\mathbb{R} \cup \{\infty\}, \oplus, \odot)$ where for two elements $a,b \in \mathbb{R} \cup \{\infty\}$, $a \oplus b := \min\{a,b\}$, $a \odot b := a + b$, and now has increasing relevance to computational studies and real data applications, such as phylogenomics, economics, and machine learning.


With this statistical motivation in mind, in this paper we combine curvature with tropical geometry and study \emph{Alexandrov curvature}---a general measure for arbitrary metric spaces that depends on geodesic triangles---in the tropical projective torus. In tropical geometry, there are combinatorially different kinds of triangles. It turns out that the combinatorial type of tropical triangle can determine the curvature; and in general, the curvature behavior is tightly connected to this type.  In this sense, the interest of our study is at least twofold: it is the first study of Alexandrov curvature in tropical geometry and, to the best of our knowledge, it is also the first instance where Alexandrov curvature is computed on a combinatorial object.

In this work:
\begin{itemize}
\item We show that Alexandrov curvature in the plane with respect to the tropical metric may be positive, negative, and undefined in the same space: in particular, we prove that two out of the five combinatorial triangle types in the plane are always undefined, and one always exhibits positive curvature;
\item We give proportions of the occurrence of the two remaining combinatorial types of tropical triangle by random sampling: we show that one type is almost always negatively curved, while the other is almost always undefined;
\item We give a procedure to randomly sample tropical triangles by type;
\item We explore tropical triangles and curvature in higher dimensions, in particular, we study by simulation the Alexandrov curvature behavior on sets of phylogenetic trees; and finally,
\item We provide an open repository that makes all the code for all our numerical experiments publicly available.
\end{itemize}
These results are relevant because they have direct implications on statistical studies and computation in tropical geometric settings.  Additionally, they are interesting because they reveal the intricate and complex geometric nature of the tropical projective torus, indicating that there still remains much to be understood.

The remainder of this paper is organized as follows.  We first provide explicit statistical motivations for our study in Section \ref{sec:motivation} along with an overview on existing metric geometric work in tropical geometry.  In Section \ref{sec:tropical_metric}, we introduce the tropical projective torus and tropical metric as our space of interest.  In Section \ref{sec:curvature}, we give a brief overview of the general mathematical notion of curvature and discuss previous studies of curvature in tropical settings.  In Section \ref{sec:alexandrov_plusminus}, we study positive and negative Alexandrov curvature in the tropical projective torus and give our first theoretical and experimental results.  We then turn to undefined Alexandrov curvature in the plane in Section~\ref{sec:undefined} and numerically explore the role that the combinatorial type of a triangle plays in tropical curvature.  We extend our study to higher dimensions in Section \ref{sec:higherdim} and include an experiment on the curvature behavior on sets of phylogenetic trees.  We end with a discussion in Section \ref{sec:discussion} on future directions for study and the potential implications of tropical Alexandrov curvature on other computational works in tropical geometry.


\section{Motivation: Tropical Statistics and Metric Geometry}
\label{sec:motivation}

Our main motivation for studying the curvature of the tropical projective torus is statistical.  There is an intricate connection between tropical geometry and the space of phylogenetic trees, which is a groundbreaking result that was established by David Speyer and Bernd Sturmfels in 2004 \citep{Speyer2004}.  This theoretical result has recently gained much interest from computational and applied mathematics researchers as an alternative setting for statistical analyses of sets of phylogenetic trees because such analyses with current tools are extremely difficult.  Being able to efficiently analyze sets of phylogenetic trees in an interpretable manner is crucial, particularly in the face of a global pandemic.  Recent work has shown that the tropical geometric perspective is compatible with formal probability and statistics; moreover, statistical techniques have even been developed on tree space as well as the tropical projective torus as its ambient space with promising results for increased performance over current standards \citep{doi:10.1137/16M1079841,monod2018tropical,yoshida2019tropical,math9070779,yoshida2023tropical}.  Other occurrences of tropical geometry in the general context of algebraic statistics for computational biology arise in the book written by Lior Pachter and Bernd Sturmfels \citep{ASCB}.  Tropical geometry has even found its way to machine and deep learning, e.g., in \cite{9394420,pmlr-v80-zhang18i,trimmel2021tropex}.

In statistics, one of the most essential quantities is a measure of central tendency of data, which is a value that is as close as possible to all points in a dataset.  In arbitrary metric spaces, this measure of central tendency is called a {\em Fr\'{e}chet mean}.  One of the most classical algorithms to compute Fr\'{e}chet means is Sturm's algorithm \citep{sturm2003probability}.  Sturm's algorithm relies on the metric space being nonpositively curved.  Therefore, understanding the curvature of a space is important in order to understand if and where Sturm's algorithm may or may not be applied to compute Fr\'{e}chet means in the tropical projective torus---or, more specifically, in the space of phylogenetic trees).  Recent research in statistics has shown that it is becoming increasingly important to take into account geometric features when developing statistical methodology on nonstandard spaces \citep{https://doi.org/10.1002/wics.1526,Kobayashi2019}.

Additionally, the curvature of a space is a fundamental invariant and an important metric geometric feature that is interesting to study in itself.  Our study of curvature adds to the growing body of work on metric geometry in tropical geometry.  Recently, the intrinsic geometry of tropical semialgebraic sets is significant in work by Alessandrini and Jell and co-authors \citep{Alessandrini+2013+155+190,10.1093/imrn/rnaa112}; this is relevant because some instances of curvature are also intrinsic (please see the discussion to come in Section \ref{sec:curvature}).  In addition, metric geometric aspects such as volume, bisectors, and Voronoi diagrams have been introduced by Loho and Schymura, and Criado and co-authors \citep{Loho2020,criado2021tropical}.  Notably, Bo Lin, Bernd Sturmfels and co-authors study convexity in tree spaces using the tropical metric \citep{doi:10.1137/16M1079841}.  Recently, an interest in differential geometric and probabilistic aspects has resulted in a study of optimal transport and Wasserstein distances in the tropical projective torus \citep{Lee2021}.


\section{The Tropical Projective Torus, Tropical Metric, and Tropical Line Segments} 
\label{sec:tropical_metric}

The tropical projective torus is the space in which we work in this paper.

\begin{definition}
For $x, y \in \mathbb{R}^n$, consider the equivalence relation 
$$
x \sim y \Leftrightarrow \mbox{all coordinates of~} (x-y) \mbox{~are equal.}  
$$
The {\em tropical projective torus} $\mathbb{R}^n/\mathbb{R}\one$ is the quotient space given by the set of equivalence classes under $\sim$.
\end{definition}

\begin{remark}
\label{rem:vectorspace}
The notation $\R^n/\R\one$ can literally be interpreted as a quotient vector space, namely Euclidean space modulo the subspace generated by the span of the all-ones vector $\one = (1,\dots,1)$.
\end{remark}




The tropical projective torus may be equipped with a metric, giving rise to a metric space.  Our metric of interest in this paper is the {\em tropical metric}.  As its name implies, the tropical metric arises in the context of tropical geometry and has been referred to as a generalized projective Hilbert metric in other literature \citep{AKIAN20113261,COHEN2004395}.  In tropical geometry, it has been used in tree settings, convexity studies, and probability \citep{joswig2007josephine,hampe2015tropical,doi:10.1137/16M1079841,monod2018tropical,tran2020tropical,yoshida2019tropical}.

\begin{definition}
\label{def:tropicalmetric}
Let $x, y \in \mathbb{R}^n$ and let $[x],[y]$ be their representatives in the tropical projective torus.  We define the tropical metric on $\mathbb{R}^n/\mathbb{R}\one$ as
\begin{align*}
\dtr([x],[y]) :=& \max_{1\leq i < j \leq {n}}\big|(x_i-y_i)-(x_j -y_j)\big|\\
=& \max_{1 \leq i \leq n} (x_i - y_i) - \min_{1 \leq i \leq n} (x_i - y_i).
\end{align*}
\end{definition}

The metric space $(\R^n/\R\one, \dtr)$ can be identified with a normed linear space in the following manner.  Consider the following map 
\begin{align*}
\pi: \R^n/\R\one & \rightarrow \R^{n-1}\\
[x] & \mapsto (x_2 - x_1, \ldots, x_n - x_1); 
\end{align*}
$\pi$ is a linear isomorphism.  We may define a norm on $\R^{n-1}$ by 
$$
\|x\|_{\mathrm{tr}} := \max \left( \max_{1 \leq i < j \leq n}|x_i - x_j|,\, \max_{1 \leq i \leq n}|x_i| \right);
$$
denote the induced distance by $\hat{d}_{\mathrm{tr}}$.  Then
$$
\dtr([x], [y]) = \|\pi([x]) - \pi([y])\|_{\mathrm{tr}} = \hat{d}_{\mathrm{tr}}(\pi([x]), \pi([y]))
$$
and $\pi$ is an isometry. The isometric embedding of the tropical projective torus into Euclidean space $\pi$ makes $(\R^n/\R\one, \dtr)$ particularly compatible with computational studies. However, we have the following fact.






\begin{proposition}
\label{prop:parallelogram}
For $n \geq 3$, $(\R^{n-1}, \| \cdot \|_{\mathrm{tr}})$ is not a Hilbert space.
\end{proposition}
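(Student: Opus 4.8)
The plan is to invoke the classical Jordan--von Neumann characterization: a norm comes from an inner product if and only if it satisfies the parallelogram law $\|u+v\|_{\mathrm{tr}}^2 + \|u-v\|_{\mathrm{tr}}^2 = 2\|u\|_{\mathrm{tr}}^2 + 2\|v\|_{\mathrm{tr}}^2$ for all $u,v$. Since a Hilbert space is in particular an inner-product space, it therefore suffices to exhibit a single pair of vectors in $\R^{n-1}$ for which this identity fails.

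First I would record a convenient reformulation of the norm. Writing $x = (x_1,\dots,x_{n-1}) \in \R^{n-1}$ and adjoining a phantom coordinate equal to $0$, one checks directly from the definition that $\|x\|_{\mathrm{tr}}$ equals the range $\max(0,x_1,\dots,x_{n-1}) - \min(0,x_1,\dots,x_{n-1})$, because the maximal pairwise difference among $\{0,x_1,\dots,x_{n-1}\}$ accounts simultaneously for the differences $|x_i - x_j|$ and for the terms $|x_i| = |x_i - 0|$ in the definition. This makes the norm of sparse vectors immediate to read off.

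Next I would take the two standard basis vectors $u = e_1$ and $v = e_2$ in $\R^{n-1}$, which exist precisely because $n-1 \geq 2$, i.e.\ $n \geq 3$, padding with zeros in the remaining coordinates. Using the range formula I compute $\|u\|_{\mathrm{tr}} = \|v\|_{\mathrm{tr}} = 1$ and $\|u+v\|_{\mathrm{tr}} = 1$, while $u-v = (1,-1,0,\dots,0)$ has range $1 - (-1) = 2$, so $\|u-v\|_{\mathrm{tr}} = 2$. The parallelogram law would then require $1^2 + 2^2 = 2\cdot 1^2 + 2\cdot 1^2$, that is $5 = 4$, a contradiction. Hence $\|\cdot\|_{\mathrm{tr}}$ is not induced by any inner product, and $(\R^{n-1}, \|\cdot\|_{\mathrm{tr}})$ is not a Hilbert space.

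There is no serious obstacle here; the only point needing a little care is confirming that the counterexample persists in every dimension $n-1 \geq 2$, which the zero-padding handles, since each norm involved depends only on the two nonzero entries together with the phantom zero. The mild art lies in choosing vectors whose tropical norms are trivial to evaluate yet force $u+v$ and $u-v$ to have sharply different ranges, and the choice $u = e_1$, $v = e_2$ is essentially the simplest such pair.
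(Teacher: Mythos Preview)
Your proof is correct and follows essentially the same approach as the paper: both invoke the parallelogram law and exhibit the same counterexample $u=e_1$, $v=e_2$, obtaining $1^2+2^2\neq 2(1^2+1^2)$. Your added remark recasting $\|\cdot\|_{\mathrm{tr}}$ as the range over the coordinates together with a phantom zero is a nice touch that makes the norm evaluations transparent, but the substance of the argument is identical.
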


\begin{proof}
Recall that in a normed space $(V, \|\cdot\|)$, if there is an inner product on $V$ such that $\|x\|^2 = \langle x,x \rangle$ for all $x \in V$, then the parallelogram law
$$
\|x+y\|^2 + \|x-y\|^2 = 2\|x\|^2 + 2\|y\|^2
$$
must hold. However, consider
$x = (1,0, \ldots, 0)$ and $y= (0,1,0,\ldots,0)$.  Then
$$
\|x\|_{\mathrm{tr}} = \|y\|_{\mathrm{tr}} = 1 \qquad \mbox{~and~} \qquad \|x+y\|_{\mathrm{tr}} = 1,\, \|x - y\|_{\mathrm{tr}}=2,
$$
but $1^2 + 2^2 \neq 2(1^2 + 1^2)$, so the parallelogram law does not hold.
\end{proof}

\begin{corollary}
\label{cor:cat}
$(\R^{n-1}, \hat{d}_{\mathrm{tr}})$ is not a $\mathrm{CAT}(k)$ space for any $n \geq 3, k \in \R$.
\end{corollary}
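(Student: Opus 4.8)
The plan is to exploit the fact that $\hat d_{\mathrm{tr}}$ is induced by the norm $\|\cdot\|_{\mathrm{tr}}$, so that $(\R^{n-1}, \hat d_{\mathrm{tr}})$ is a finite-dimensional normed vector space; in particular it is a complete geodesic metric space in which straight line segments are geodesics, so the $\mathrm{CAT}(k)$ comparison inequalities are available. The strategy is to argue that if this space were $\mathrm{CAT}(k)$ for \emph{some} $k \in \R$, then it would in fact be $\mathrm{CAT}(0)$, and then to show that a $\mathrm{CAT}(0)$ normed space must satisfy the parallelogram law, contradicting Proposition~\ref{prop:parallelogram}.

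First I would dispose of the case $k \le 0$ using monotonicity of the curvature bound: every $\mathrm{CAT}(k)$ space with $k \le 0$ is automatically $\mathrm{CAT}(0)$. For the remaining case $k > 0$ I would use the self-similarity of a normed space under scaling. For any $c > 0$ the map $x \mapsto c\,x$ is an isometry between $(\R^{n-1}, \hat d_{\mathrm{tr}})$ and the same space with its metric rescaled by the factor $c$, and rescaling the metric of a $\mathrm{CAT}(k)$ space by $c$ produces a $\mathrm{CAT}(k/c^2)$ space. Hence $(\R^{n-1}, \hat d_{\mathrm{tr}})$ is $\mathrm{CAT}(k/c^2)$ for every $c > 0$, that is, $\mathrm{CAT}(\varepsilon)$ for every $\varepsilon > 0$; letting $\varepsilon \to 0^+$ and passing to the limit in the comparison inequalities (the model spaces $M_\varepsilon$ converge to the Euclidean plane) then yields that the space is $\mathrm{CAT}(0)$.

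Finally I would extract the contradiction in the $\mathrm{CAT}(0)$ case via the Bruhat--Tits $\mathrm{CN}$ inequality. Taking $p = 0$ and arbitrary $x, y$, and letting $m = (x+y)/2$ be the midpoint of the segment $[x,y]$, the $\mathrm{CN}$ inequality gives $\|x+y\|_{\mathrm{tr}}^2 + \|x-y\|_{\mathrm{tr}}^2 \le 2\|x\|_{\mathrm{tr}}^2 + 2\|y\|_{\mathrm{tr}}^2$; applying the very same inequality to the vectors $u = x+y$ and $v = x-y$ yields the reverse inequality, so the parallelogram law holds with equality for \emph{all} $x, y$. In particular it must hold for the explicit $x = (1,0,\ldots,0)$ and $y = (0,1,0,\ldots,0)$ of Proposition~\ref{prop:parallelogram}, contradicting the computation there and ruling out $\mathrm{CAT}(k)$ for every $k$.

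I expect the main obstacle to be the case $k > 0$, where no direct $\mathrm{CN}$-type inequality is available: the scaling self-similarity reduction to $\mathrm{CAT}(0)$, together with the limiting passage $\varepsilon \to 0^+$, is the genuine crux of the argument. By comparison, the $k \le 0$ reduction and the algebra turning $\mathrm{CN}$ into the parallelogram law are routine.
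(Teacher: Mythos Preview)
Your argument is correct, but it takes a different route from the paper. The paper's proof is a one-line citation: Proposition~II.1.14 of Bridson--Haefliger states that a normed vector space is $\mathrm{CAT}(k)$ for some $k\in\R$ if and only if the norm is induced by an inner product; combined with Proposition~\ref{prop:parallelogram}, this finishes immediately. What you have done is essentially unpack that black box by hand: the monotonicity reduction for $k\le 0$, the scaling self-similarity reduction for $k>0$, and the derivation of the parallelogram identity from the $\mathrm{CN}$ inequality together constitute exactly the proof of (one direction of) the cited proposition. Your version buys self-containment and makes transparent \emph{why} the normed-space structure forces the conclusion; the paper's version buys brevity by outsourcing the work to a standard reference. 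One small point worth making explicit in your write-up: when you invoke the $\mathrm{CN}$ inequality with $m=(x+y)/2$, you are using that in a $\mathrm{CAT}(0)$ space geodesics are unique, so the affine midpoint of the linear segment really is \emph{the} metric midpoint; this is true but deserves a word.
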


\begin{proof}
By Proposition 1.14 of \cite{bridson2013metric}, a normed linear space is a $\mathrm{CAT}(k)$ space for $k \in \R$ if and only if the norm is induced by an inner product.  Since $(\R^{n-1}, \hat{d}_{\mathrm{tr}})$ is not a Hilbert space, it is not $\mathrm{CAT}(k)$ for any $k$.
\end{proof}
This corollary has an immediate implication on the curvature of the tropical projective torus endowed with the tropical metric and rules out the vast literature of results on $\mathrm{CAT}(k)$ spaces, e.g., \cite{jost2012nonpositive,ohta2012barycenters}.  In particular, we know that that $(\R^{n-1}, \hat{d})$ is not a $\mathrm{CAT}(0)$ space where geodesics are unique.  In fact, there are infinitely many tropical geodesics between any two points in $(\R^n/\R\one, \dtr)$, e.g., \cite{monod2018tropical}.  This leads naturally to consider the following object.

\begin{definition}
\label{def:tls}
The {\em tropical line segment} connecting $[x], [y] \in \R^n/\R\one$ is the set
$$
\gamma_{xy} = \{\alpha \odot [x] \oplus \beta \odot [y] \mid \alpha,\beta \in \R\},
$$
where tropical addition is performed coordinate-wise.
\end{definition}


The tropical line segment between any two points in $\R^n/\R\one$ is unique and it is a geodesic \citep{monod2018tropical}.  We will often reparametrize the tropical line segment as
\begin{equation}
\label{eq:tls}
\gamma_{xy}(t) := t \odot [x] \oplus [y]
\end{equation}
where $t:= \alpha-\beta$.  In other words, $\gamma_{xy}(t)$ is the tropical line segment connecting $[x]$ to $[y]$, parametrized by $t \in \R$.  Note that it is sufficient for the parametrization $t$ to fall within $[0, \dtr(x,y)]$, which we use in the examples in this paper.

\begin{remark}\label{rem:notation}
To ease notation, from now on, when we write $x=(x_1,\dots,x_n) \in \R^{n+1}/\R\one$ we will refer to the equivalence class $[x]=[(0,x_1,\dots,x_n)]$. This way we can apply directly $\dtr$ to such points remembering the initial zero. The next example illustrates this.
\end{remark}

\begin{example}
\label{ex:tls_1}
Fix points $b = (0,0)$ and $c = (3,2)$ in $\R^3/\R\one$. We have that 
\[
\dtr(b,c) = \max(0, 0-3, 0-2) - \min(0, 0-3, 0-2) = 0 - (-3) = 3.
\]
The tropical line segment $\gamma_{bc}(t)$ for $t \in [0,3]$ connecting $b$ and $c$ is given by
\begin{align}
\gamma_{bc}(t) & = t \odot [(0,0,0)] \oplus [(0,3,2)] \nonumber \\ 
& = \min((t+0,0),\, (t+0,3),\, (t+0,2)) \nonumber \\
& = \begin{cases} 
[(0,t,t)], & 0 \leq t < 2; \\
[(0,t,2)], & 2 \leq t \leq 3. \label{eq:tls_ex1} 
\end{cases}
\end{align}
The tropical line segment takes the form as in Figure \ref{fig:tls_types}(a).
\end{example}

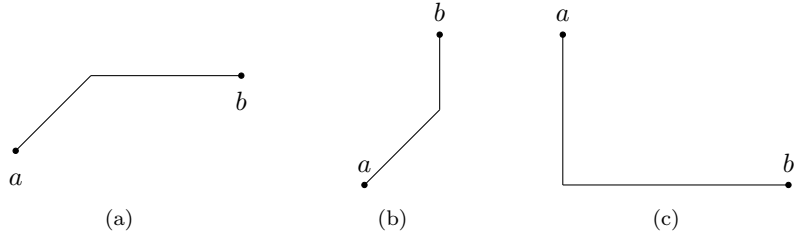
\begin{figure}
\centering
\subfigure[]{
\begin{tikzpicture}
\coordinate (a) at (0,0);
\coordinate (b) at (3,1);
\coordinate (k) at (1,1);
\foreach \coord in {a,b}
	\draw[fill=black] (\coord) circle[radius=1pt];
\foreach \coord in {a,b}
\node[label={[label distance=-20pt]:$\coord$}] at (\coord) {};
\draw (a)--(k);
\draw (b)--(k);
\end{tikzpicture}}
\hskip1cm
\subfigure[]{
\begin{tikzpicture}
\coordinate (a) at (0,0);
\coordinate (b) at (1,2);
\coordinate (k) at (1,1);
\foreach \coord in {a,b}
	\draw[fill=black] (\coord) circle[radius=1pt];
\foreach \coord in {a,b}
	\node[label={[label distance=-2pt]90:$\coord$}]at (\coord) {};
\draw (a)--(k);
\draw (b)--(k);
\end{tikzpicture}}
\hskip1cm
\subfigure[]{
\begin{tikzpicture}
\coordinate (a) at (0,2);
\coordinate (b) at (3,0);
\coordinate (k) at (0,0);
\foreach \coord in {a,b}
	\draw[fill=black] (\coord) circle[radius=1pt];
\foreach \coord in {a,b}
	\node[label={[label distance=-2pt]90:$\coord$}]at (\coord) {};
\draw (a)--(k);
\draw (b)--(k);
\end{tikzpicture}}
\caption{Types of Tropical Line Segment: (a) L1; (b) L2; (c) L3.}
\label{fig:tls_types}
\end{figure}

In general, a tropical line segment in $\R^n/\R\one$ consists of a concatenation of $n-1$ Euclidean line segments (see Proposition 3 of \cite{develin2004tropical} and Remark 2.4 in \cite{hampe2015tropical}). Since we will study carefully the plane case, we record here the following explicit description for $n=3$.

\begin{lemma}
\label{lemma:tls_types}
There are three types of tropical line segments between two points in general position $a = (a_1, a_2)$ and $b = (b_1, b_2)$ in $\R^3/\R\one$; w.l.o.g. $a_1 < b_1$.  Each tropical line segment is characterized by inequalities on the coordinates and exhibits a unique bending point; the endpoints $a$, $b$ and the bending point are joined by lines of slope 0, 1, or $\infty$.  Their explicit forms are given as follows:
\begin{enumerate}[L1]
\item For $a_2 < b_2$ and $a_1-a_2 < b_1 - b_2$, the bending point is at $(a_1 + b_2 - a_2, b_2)$, $t = b_2 - a_2$, and
\begin{equation*}
\label{eq:tls_l1}
\gamma_{ab}(t) = \begin{cases}
(a_1 + t, a_2 + t), & 0 \leq t \leq b_2 - a_2;\\
(a_1 + t, b_2), & b_2 - a_2 \leq t \leq b_1 - a_1.
\end{cases}
\end{equation*}
Its length is $b_1 - a_1$.

\item For $a_2 < b_2$ and $a_1-a_2 > b_1 - b_2$, the bending point is at $(b_1, a_2 + b_1 - a_1)$, $t = b_1 - a_1$, and 
\begin{equation*}
\label{eq:tls_l2}
\gamma_{ab}(t) = \begin{cases}
(a_1 + t, a_2 + t), & 0 \leq t \leq b_1 - a_1;\\
(b_1, a_2 + t), & b_1 - a_1 \leq t \leq b_2 - a_2.
\end{cases}
\end{equation*}
Its length is $b_2 - a_2$.

\item For $a_2 > b_2$, the bending point is at $(a_1, b_2)$, $t = a_2 - b_2$, and 
\begin{equation*}
\label{eq:tls_l3}
\gamma_{ab}(t) = \begin{cases}
(a_1, a_2 - t), & 0 \leq t \leq a_2 - b_2;\\
(a_1 + b_2 - a_2 + t, b_2), & a_2 - b_2 \leq t \leq a_2 - b_2 + b_1 - a_1.
\end{cases}
\end{equation*}
Its length is $(a_2 - b_2) + (b_1 - a_1)$.
\end{enumerate}
\end{lemma}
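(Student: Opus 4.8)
The plan is to evaluate $\gamma_{ab}(t)$ directly from Definition~\ref{def:tls} in its reparametrized form \eqref{eq:tls} and then extract the three types from the combinatorics of the coordinatewise minima. Following the convention of Remark~\ref{rem:notation}, take the representatives $[a]=[(0,a_1,a_2)]$ and $[b]=[(0,b_1,b_2)]$. Since $t\odot[a]=(t,\,a_1+t,\,a_2+t)$ and $\oplus$ is the coordinatewise minimum, a representative of $\gamma_{ab}(t)$ is
\[
\big(\min(t,0),\ \min(a_1+t,b_1),\ \min(a_2+t,b_2)\big).
\]
Normalizing the leading coordinate to $0$ (equivalently, subtracting $\min(t,0)$ from each entry, which is exactly the isomorphism $\pi$), I obtain in the two-coordinate notation $\gamma_{ab}(t)=(c_1(t),c_2(t))$ with
\[
c_i(t)=\min(a_i+t,b_i)-\min(t,0),\qquad i=1,2.
\]

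First I would record that $t\mapsto(c_1(t),c_2(t))$ is piecewise linear, the active argument of the three minima switching only at $t\in\{0,\ b_1-a_1,\ b_2-a_2\}$. Because $a_1<b_1$ we have $b_1-a_1>0$; a short computation shows $\gamma_{ab}(t)\equiv a$ for $t$ below the smallest of these three values and $\gamma_{ab}(t)\equiv b$ for $t$ above the largest. Hence the nonconstant part of the path is the interval between the extreme breakpoints, and it contains a single interior breakpoint, the median of $\{0,\,b_1-a_1,\,b_2-a_2\}$: this is the unique bending point. A direct check of the velocity vectors $(c_1'(t),c_2'(t))$ available on the two linear pieces shows each equals one of $(1,1)$, $(1,0)$, $(0,1)$, or $(0,-1)$, so each piece has slope $0$, $1$, or $\infty$, as claimed.

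The three strict-inequality hypotheses then single out precisely the three possible medians, giving an exhaustive split. If $a_2>b_2$ the median is $0$ (type L3); if $a_2<b_2$ with $a_1-a_2<b_1-b_2$, i.e.\ $b_2-a_2<b_1-a_1$, the median is $b_2-a_2$ (type L1); and if $a_2<b_2$ with $a_1-a_2>b_1-b_2$, i.e.\ $b_2-a_2>b_1-a_1$, the median is $b_1-a_1$ (type L2). In the L1 and L2 cases the nonconstant interval already begins at $t=0$, where $\gamma_{ab}(0)=a$, so substituting the two pieces reproduces \eqref{eq:tls_l1} and \eqref{eq:tls_l2} together with the bending points $(a_1+b_2-a_2,b_2)$ and $(b_1,a_2+b_1-a_1)$. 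In the L3 case the interval begins at $t=b_2-a_2<0$, so I reparametrize by the shift $t\mapsto t+(a_2-b_2)$ to place $a$ at $t=0$; this yields \eqref{eq:tls_l3} with bending point $(a_1,b_2)$.

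Finally, for the lengths one may either sum the tropical lengths of the two linear pieces---a horizontal or vertical displacement of size $\delta$ has tropical length $\delta$, and a slope-$1$ step from $(p,q)$ to $(p+\delta,q+\delta)$ likewise has length $\delta$---or simply recall that the tropical line segment is a geodesic \citep{monod2018tropical} and read off $\dtr(a,b)$ from Definition~\ref{def:tropicalmetric}: with the difference vector $(0,\,a_1-b_1,\,a_2-b_2)$ this gives $b_1-a_1$, $b_2-a_2$, and $(a_2-b_2)+(b_1-a_1)$ in the three cases. I expect the only real obstacle to be organizational rather than conceptual: keeping the normalization by $\min(t,0)$ and the L3 parameter shift consistent across all cases, and confirming that the three strict-inequality hypotheses are mutually exclusive and cover every nondegenerate configuration with $a_1<b_1$. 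Everything else reduces to evaluating the two minima on each subinterval.
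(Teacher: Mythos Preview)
Your proposal is correct and follows essentially the same approach as the paper: both evaluate the tropical line segment directly from \eqref{eq:tls} as $(\min(t,0),\min(a_1+t,b_1),\min(a_2+t,b_2))$, normalize the leading coordinate, and read off the piecewise description and length in each case. Your ``median of the three breakpoints'' framing is a tidy organizational device that the paper does not make explicit, but it is the same computation underneath; the only additional wrinkle you handle (the parameter shift in L3 so that $\gamma_{ab}(0)=a$) is implicit in the paper's statement as well.
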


\begin{proof}
Consider type L1: from the defining inequality $a_1 - a_2 < b_1 - b_2$, we have that $a_1 - b_1 < a_2 - b_2$.  We use this to find the length of this line segment by computing
\begin{align}
\dtr(a,b) & = \max(0,\, a_1 - b_1,\, a_2 - b_2) - \min(0,\, a_1 - b_1,\, a_2 - b_2) \label{eq:tls_length}\\
 & = 0 - (a_1 - b_1)  = b_1 - a_1. \nonumber
\end{align}
To compute the tropical line segment between $a$ and $b$, we use (\ref{eq:tls}) and the defining inequalities $a_1 < b_1$ and $a_2 < b_2$ to compute
\begin{align*}
\gamma_{ab}(t) & = (\min(t, 0),\, \min(a_1 + t, b_1),\,  \min(a_2 + t, b_2)) \\ \nonumber 
& = (0,\, a_1 + t,\, a_2 + t), \nonumber
\end{align*}
as long as $t \leq b_1 - a_1$.  For $t \geq b_1 - a_1$, we still have $a_1 + t \leq b_1$ but now $b_2 \leq a_2 + t$, so
$$
\gamma_{ab}(t) = (0,\, \min(a_1 + t, b_1),\, \min(a_2 + t, b_2)) = (0,\, a_1 + t,\, b_2),
$$
giving, as desired,
$$
\gamma_{ab}(t) = \begin{cases}
(a_1 + t, a_2 + t), & 0 \leq t \leq b_2 - a_2;\\
(a_1 + t, b_2), & b_2 - a_2 \leq t \leq b_1 - a_1.
\end{cases}
$$

A similar computation with the defining inequality $a_1 - a_2 > b_1 - b_2$ for L2 gives the desired result, by noting that here, the direction of the inequality is reversed to that of L1.

For L3, we compute (\ref{eq:tls_length}) and note that we now have $a_2 > b_2$ while $a_1 < b_1$, so
$$
\dtr(a,b) = a_2 - b_2 - (a_1 - b_1) = (a_2 - b_2) + (b_1 - a_1).
$$
An analogous computation as above gives the desired result for $\gamma_{ab}(t)$.
\end{proof}


\section{Curvature: Classical and Alexandrov}
\label{sec:curvature}

Curvature is a deep mathematical concept that has been extensively studied in various settings; it is fundamental in Riemannian and, more generally, differential geometry.  Here, we provide a very brief whistle-stop tour of some classical notions of curvature. It serves our twofold purposes of motivating our study of Alexandrov curvature and mentioning previous studies of curvature in tropical geometry.

Classical and vastly more comprehensive discourses on the subject of curvature have been documented by, for example, Burago and Ivanov; Saucan; and Gromov \citep{burago2001course,saucan2006curvature,gromov2019four}.

\subsection{Curvature for Curves}
\label{subsec:curvature_curves}

In the simplest case of a circle, its curvature is given by $1/r$ where $r$ is the radius.  This notion can be extended to plane curves where the curvature of a plane curve at the point $p$ is the curvature of the circle that best fits the curve at the point $p$.  More precisely, it can be thought of as the curvature of the largest circle that is fully contained on one side of the curve and has one common point with $p$.  Although this concept is rigorous, the main challenge is computational: the {\em osculating circle} described by Newton in 1687 overcomes this limitation by defining the limit of circles with three common points with the curve \citep{newton1833philosophiae}.  If the image of a function $f: [0,1] \rightarrow \mathbb{R}^2$ gives the path $\gamma$, then the osculating circle $C$ at $\gamma_0 = f(t_0)$ is defined by
$$
C(\gamma_0) = \lim_{\gamma_1, \gamma_2 \rightarrow \gamma_0} C(\gamma_0, \gamma_1, \gamma_2) = \lim_{t_1, t_2 \rightarrow t_0} C(t_0, t_1, t_2),
$$
where $\gamma_i = \gamma(t_i)$ for $i=1,2$.  The {\em curvature} of $\gamma$ at $\gamma_0$ is then defined as $\kappa_\gamma(\gamma_0) := 1/r(C(\gamma_0))$, where $r(C(\gamma_0))$ is the radius of $C(\gamma_0)$.  The osculating circle is compatible with the theoretical notion of maximality for circles given above; see, e.g., \cite{spivak1973comprehensive}.

Curvature of curves and paths has been previously studied in tropical geometry in the context of optimization theory and linear programming \citep{allamigeon2018log}.  Specifically, in the context of interior-point methods---a class of algorithms solving linear and nonlinear convex optimization problems---the {\em total curvature} of a specific curve known as the central path gives a measure of computational complexity.  The total curvature of a curve is the integral of the norm of the acceleration, assuming travel over the curve at unit speed.  Using tropical geometry, Allamigeon and co-authors were able to disprove a 30-year-old conjecture on an upper bound for the computational complexity.  To do this, they tropicalize the central path and  approximate the classical total curvature of the tropicalized central path by polygonal curves, thus giving lower bounds on the classical total curvature of the classical central path (see Theorem 25 of \cite{allamigeon2018log}).  They also give an approximation of the tropicalized central path itself by tropical line segments.

We point out here that the approach used in the work of Allamigeon and co-authors is in the same spirit of our work: we are similarly studying a classical concept of a tropical geometric object.  In their work, they are studying the classical total curvature of the tropicalized central path (which they refer to as ``tropical total curvature'' in the introduction of their paper); similarly, in this paper, we aim to study the classical notion of Alexandrov curvature of tropical triangles in the tropical projective torus (which we similarly refer to as ``tropical Alexandrov curvature'' in the title of our paper).  In particular, we are not tropicalizing Alexandrov curvature, in contrast to a large body of existing work that tropicalizes classical concepts---for instance, by replacing classical operators in a particular object by their tropical equivalents, e.g., \cite{kolokoltsov2013idempotent,tran2020tropical}.  

\subsection{Alexandrov Curvature}

Alexandrov curvature is a generalization of sectional curvature (i.e., the higher-dimensional analogue of Gaussian curvature), which relies on an inner product structure.  By Proposition \ref{prop:parallelogram} above, $(\R^n/\R\one, \dtr)$ does not comprise an inner product structure via its isometric embedding into $(\R^{n-1}, \hat{d}_{\mathrm{tr}})$, necessitating a more flexible notion of curvature.  Alexandrov curvature is the natural answer and direct extension \citep{ollivier2011visual,saucan2006curvature}.  Moreover, it is an intrinsic geometric property, which adds to other existing results on intrinsic geometry in tropical geometry \citep{Alessandrini+2013+155+190,10.1093/imrn/rnaa112,Loho2020}.

Let $(X, d_X)$ be a geodesic space, then a {\em triangle} in $X$ is given by three points $(a,b,c) \in X^3$, known as the {\em vertices} of triangle, together with three geodesic curves from $a$ to $b$, $b$ to $c$, and $c$ to $a$, known as the {\em edges} or {\em sides} of the triangle, with lengths of these curves realizing the distances $d_X(a,b)$, $d_X(b,c)$, and $d_X(c,a)$, respectively.  The {\em curvature criterion of Alexandrov} states that triangles become ``skinnier" under negative curvature, and ``fatter" under positive curvature; see Figure \ref{fig:ollivier_skinny} for an example of a skinny triangle.  ``Skinniness" and ``fatness" are measured by the distance between a vertex of the triangle and any point on its opposite edge in comparison to the Euclidean counterpart: for any triangle in $X$, there exists a {\em comparison triangle} $(a', b', c')$ in the Euclidean plane whose sides have the same lengths as its counterpart in $X$.  Such a comparison triangle is unique up to isometry.  Notice that Alexandrov is directly compatible in philosophy to Gaussian curvature, previously discussed above in Section \ref{subsec:curvature_curves}; in this sense, Alexandrov curvature is at the same time classical and general.

\begin{figure}[h!]
\centering
\includegraphics[scale=0.65]{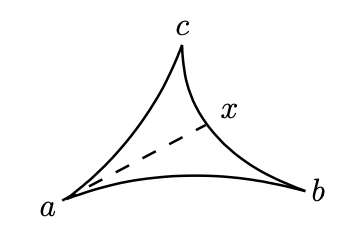}
\caption{An example of a skinny triangle in the sense of Alexandrov \citep{ollivier2011visual}.}
\label{fig:ollivier_skinny}
\end{figure}

\begin{definition}{\cite[Definition 2.1]{ollivier2011visual}}
\label{def:alexandrov}
Let $(X, d_X)$ be a geodesic space; let $d_e$ denote the usual Euclidean metric.  $X$ is said to be a space of {\em curvature $\leq 0$ in the sense of Alexandrov} (or the space has negative (nonpositive) Alexandrov curvature) if, for any small enough triangle $(a,b,c)$ in $X$, and for any point $x$ on the $bc$ edge of this triangle, the following holds:
\begin{equation}
\label{eq:alexandrovneg}
d_X(a,x) \leq d_e (a', x'),
\end{equation}
where $(a', b', c')$ is the comparison triangle of $(a, b, c)$ in the (flat, 0-curvature) Euclidean plane, and $x'$ is a point on the $b' c'$ side corresponding to $x$; i.e., such that $d_e(x', b') = d_X(x, b)$.  Similarly, $X$ is a space of {\em curvature $\geq 0$ in the sense of Alexandrov} (or the space has positive (nonnegative) Alexandrov curvature) if, in the same situation, the following reverse inequality holds:
\begin{equation}
\label{eq:alexandrovpos}
d_X(a, x) \geq d_e(a', x').
\end{equation}
\end{definition}

We will say that a triangle is \emph{skinny} or has \emph{negative curvature} if it satisfies inequality \eqref{eq:alexandrovneg}, and is \emph{fat} or has \emph{positive curvature} if it satisfies inequality \eqref{eq:alexandrovpos}. If equality holds in \eqref{eq:alexandrovneg} and \eqref{eq:alexandrovpos} we say that the triangle is \emph{flat} or has \emph{zero curvature}.

\paragraph{Triangle Sizes.}

In a $\mathrm{CAT}(k)$ space for $k \leq 0$ (i.e., for curvature less than or equal to 0), {\em all} triangles, not only small ones, satisfy the comparison criterion (\ref{eq:alexandrovneg}) under equality.  This fact motivates us to establish a similar result in the tropical setting, as follows.




\begin{lemma}
\label{lem:scale}
Consider the triangle $(a,b,c) \in (\R^n/\R\one)^3$ and its scaled version $(\alpha a, \alpha b, \alpha c)$ by some $\alpha \in \R_{\geq 0}$. Then the triangles share the same Alexandrov curvature.
\end{lemma}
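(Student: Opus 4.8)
The plan is to show that every ingredient entering the Alexandrov comparison—the tropical distances, the tropical geodesics, and the resulting Euclidean comparison triangle—transforms equivariantly under scaling by $\alpha$, so that both sides of the defining inequalities \eqref{eq:alexandrovneg} and \eqref{eq:alexandrovpos} pick up a common factor of $\alpha$. Note first that scalar multiplication descends to the quotient $\R^n/\R\one$ by Remark \ref{rem:vectorspace}, since $x - y \in \R\one$ implies $\alpha(x-y) \in \R\one$, so $\alpha a, \alpha b, \alpha c$ are well defined independently of representatives. I would treat $\alpha > 0$ as the main case; for $\alpha = 0$ the scaled triangle degenerates to a single point, and the statement is vacuously true (trivially flat), so there is nothing to check.

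First I would record the homogeneity of the tropical metric. Directly from Definition \ref{def:tropicalmetric}, replacing representatives $x, y$ by $\alpha x, \alpha y$ multiplies each difference $x_i - y_i$ by $\alpha \geq 0$, and since a nonnegative factor commutes with both $\max$ and $\min$, one obtains $\dtr(\alpha x, \alpha y) = \alpha\,\dtr(x,y)$. In particular the three side lengths $\dtr(a,b)$, $\dtr(b,c)$, $\dtr(c,a)$ of the scaled triangle are exactly $\alpha$ times those of the original, and the triangle inequality is preserved, so the scaled triangle is genuine whenever the original is.

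The key step is to show that tropical line segments scale correctly, so that points on the edge of the scaled triangle correspond bijectively to points on the original edge. Using the reparametrized form \eqref{eq:tls}, the $i$-th coordinate of $\gamma_{bc}(t)$ is $\min(t + b_i, c_i)$; multiplying the endpoints by $\alpha \geq 0$ and again commuting $\alpha$ past the minimum gives that the $i$-th coordinate of the geodesic from $\alpha b$ to $\alpha c$ at parameter $\alpha t$ equals $\alpha\min(t + b_i, c_i)$. Hence $\gamma_{\alpha b,\,\alpha c}(\alpha t) = \alpha\,\gamma_{bc}(t)$: the edge of the scaled triangle is exactly the $\alpha$-dilate of the original edge, and a point $x = \gamma_{bc}(t)$ corresponds to $\alpha x$ on the scaled edge, with $\dtr(\alpha x, \alpha b) = \alpha\,\dtr(x,b)$. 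I expect this to be the main point, since it is what guarantees the comparison is carried out at corresponding points; it also shows that the combinatorial type (L1, L2, or L3 of Lemma \ref{lemma:tls_types}) is preserved, as every defining inequality is invariant under multiplication by $\alpha > 0$.

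Finally I would assemble the comparison. Because all three side lengths scale by $\alpha$, the Euclidean comparison triangle $(a',b',c')$ of the scaled triangle is the $\alpha$-dilate $(\alpha a', \alpha b', \alpha c')$ of the original comparison triangle, since a Euclidean triangle is determined up to isometry by its side lengths. The comparison point $x'$ on $b'c'$, characterized by $d_e(x',b') = \dtr(x,b)$, therefore scales to $\alpha x'$, which satisfies $d_e(\alpha x', \alpha b') = \alpha\,d_e(x',b') = \dtr(\alpha x, \alpha b)$ and is thus the correct comparison point for the scaled triangle. Consequently $\dtr(\alpha a, \alpha x) = \alpha\,\dtr(a,x)$ and $d_e(\alpha a', \alpha x') = \alpha\,d_e(a',x')$, so for $\alpha > 0$ the inequality $\dtr(a,x) \leq d_e(a',x')$ holds if and only if $\dtr(\alpha a, \alpha x) \leq d_e(\alpha a', \alpha x')$, and likewise with the reversed inequality or with equality. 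Since $x$ ranges over the entire edge and corresponds to an arbitrary point on the scaled edge, the two triangles satisfy exactly the same Alexandrov comparison, and hence share the same curvature.
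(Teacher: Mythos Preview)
Your proof is correct and follows essentially the same approach as the paper: both arguments rest on the homogeneity $\dtr(\alpha x,\alpha y)=\alpha\,\dtr(x,y)$ and the corresponding scaling of the Euclidean comparison triangle, concluding that the comparison inequalities \eqref{eq:alexandrovneg} and \eqref{eq:alexandrovpos} are preserved. Your version is in fact more careful than the paper's, which simply asserts that ``by the same argument'' the vertex-to-edge distance scales by $\alpha$; you make this explicit by verifying $\gamma_{\alpha b,\alpha c}(\alpha t)=\alpha\,\gamma_{bc}(t)$ and tracking the comparison point $x'$, and you also handle well-definedness on the quotient and the degenerate case $\alpha=0$.
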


\begin{proof}
Let $a=[(a_1,\dots,a_n)]$, $b=[(b_1,\dots,b_n)]$ and the corresponding vertices $a',b'$ of the Euclidean comparison triangle.

The length of the edge of the scaled triangle is given by
\begin{align*}
\dtr(\alpha a, \alpha b) & = \max_{1 \leq i \leq n}(\alpha a_i - \alpha b_i) - \min_{1 \leq i \leq n}(\alpha a_i - \alpha b_i)\\
& = \alpha \max_{1 \leq i \leq n} (a_i - b_i) - \alpha \min_{1 \leq i \leq n}(a_i - b_i)\\
& = \alpha \cdot \dtr(a, b).
\end{align*}
A verbatim calculation gives the same result for the lengths of the other two edges of the triangle.  By the same argument, the tropical distance from any vertex to its opposing edge is scaled by $\alpha$.  Since the Euclidean comparison triangle also scales in the same manner, i.e., $d_e(\alpha a', \alpha b') = \alpha \cdot d_e(a',b')$, the direction of either inequality (\ref{eq:alexandrovneg}) or (\ref{eq:alexandrovpos}) is preserved. Hence the Alexandrov curvature remains invariant: it remains negative, positive or flat.
\end{proof}

\begin{remark}
Scaling the vertices in Lemma~\ref{lemma:tls_types} also scales the domain of the length parametrization. If we want to preserve the domain, we need to speed up or slow down the parametrization by taking the parameter $t'=\alpha t$.
\end{remark}

An important consequence of Lemma \ref{lem:scale} is that finding a ``small enough" tropical triangle to study Alexandrov curvature becomes unnecessary: the size of the triangle is not important, and for a given tropical triangle, we may choose any copy of it to study curvature in $\R^n/\R\one$.  

\subsection{Related Notions: Combinatorial and Discrete Curvature}

A notion of combinatorial curvature was introduced by Gromov in the 1980s to study hyperbolic groups, which has since been widely adapted by both discretizing classical smooth notions and adapting notions for discrete settings \citep{gromov1987hyperbolic,ishida1990pseudo}. Notably, various versions of curvature have been developed for graphs, Markov chains, piecewise flat shapes, and polyhedral settings---including polyhedra, noncompact polyhedra, and polyhedral surfaces \citep{cheeger2015lower,cheeger1984curvature,ollivier2009ricci,banchoff1970critical}.  All of these ideas fall within the broad and active research direction of discrete curvature and discrete geometry \citep{najman2014discrete,najman2017modern}.  These notions may also be studied for possible alternative curvatures for the tropical projective torus, however, these are entirely different approaches which we do not take in this work.  

A relative disadvantage of many alternative curvature notions and their discrete counterparts is that they are often defined using quite complicated and long formulae, which makes it difficult to assess whether these notions are intrinsic or not.  This is in contrast to more geometric definitions---such as Alexandrov curvature---and why these may be more preferable to study. 

We note here, however, that is indeed possible to study alternative discrete notions of curvature using existing work on the tropical optimal transport problem, which has been set up and used to derive tropical Wasserstein distances: solving the optimal transport problem with respect to the tropical metric and using Dirac masses will the {\em Ollivier--Ricci curvature} of the tropical projective torus \citep{OLLIVIER2007643,Lee2021}.


\section{Negative and Positive Alexandrov Curvature in $\R^3/\R\one$:\\ A Sampling Procedure and a Theorem}
\label{sec:alexandrov_plusminus}

Let's begin by computing Alexandrov curvature in $(\R^3/\R\one, \dtr)$ for some example triangles.

\begin{remark}
In all of the examples to follow where we give explicit computations of specific triangles, the legend for the corresponding figures of the comparison distances is as follows: the $x$-axis is the value of the parameter $t$; the $y$-axis is the value of the respective tropical and Euclidean distances as a function of $t$; the green line is always the tropical distance; and the blue line is always the Euclidean distance. 
\end{remark}

\begin{figure}[ht]
\centering
\subfigure[]{
\begin{tikzpicture}
\coordinate (a) at (1,3);
\coordinate (b) at (0,0);
\coordinate (c) at (3,2);
\coordinate (k1) at (1,2);
\coordinate (k2) at (1,1);
\coordinate (k3) at (2,2);
\foreach \coord in {a,b,c}
	\draw[fill=black] (\coord) circle[radius=1pt];
\foreach \coord in {a,c}
	\node[label={[label distance=-2pt]90:$\coord$}]at (\coord) {};
\foreach \coord in {b}
\node[label={[label distance=-20pt]:$\coord$}] at (\coord) {};
\draw (a)--(k2);
\draw (k3)--(b);
\draw (c)--(k1);
\end{tikzpicture}}
\hskip3cm
\subfigure[]{
\begin{tikzpicture}
\coordinate (a) at (1.5,2.598);
\coordinate (b) at (0,0);
\coordinate (c) at (3,0);
\foreach \coord in {a}
	\node[label={[label distance=-15pt]270:$\coord'$}]at (\coord) {};
	\foreach \coord in {b}
	\node[label={[label distance=1pt]270:$\coord'$}]at (\coord) {};
\foreach \coord in {c}
	\node[label={[label distance=1pt]270:$\coord'$}]at (\coord) {};
\draw (a) -- (b) -- (c) -- (a);
\end{tikzpicture}}
\caption{(a) Example of a skinny tropical triangle; (b) Corresponding Euclidean comparison triangle.}
\label{fig:skinny}
\end{figure}
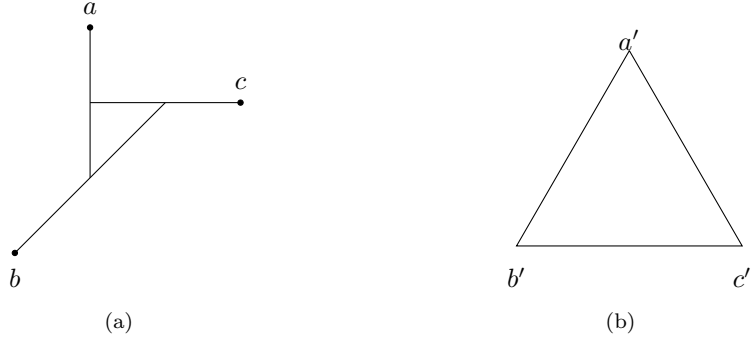

\begin{example}
\label{ex:eqskinny}
We compute the Alexandrov curvature of the tropical triangle depicted in Figure \ref{fig:skinny}(a).  The vertices are $a=(1,3),\, b=(0,0),$ and $c=(3,2)$ in $\R^3/\R\one$ (recall the notational convention from Remark~\ref{rem:notation}).

The lengths of the three edges of the tropical triangle are
\begin{align*}
\dtr(a,b) & = \dtr([(0,1,3)
,\, [(0,0,0)]) = 
\max(0,1,3) - \min(0,1,3) = 3-0 = 3\\
\dtr(b,c) & = \dtr([(0,0,0)],\, [(0,3,2)])
= \max(0,-3,-2) - \min(0,-3,-2) = 3\\
\dtr(a,c) & = \dtr([(0,1,3)],\, [(0,3,2)]) = \max(0,-2,1) - \min(0,-2,1) = 3;
\end{align*}
i.e., it is a tropical equilateral triangle.

The comparison triangle in Euclidean space is depicted in Figure \ref{fig:skinny}(b) has vertices at $a' = (\frac{3}{2}, \frac{3\sqrt{3}}{2})$, $b' = (0,0)$, and $c'=(0,3)$; it is an equilateral triangle with $d_e(a', b') = d_e(b', c') = d_e(a', c') = 3$.

\paragraph{Computing tropical and Euclidean distances between the vertex $a$ and the $bc$ edge.}
Notice that the $bc$ edge connecting the vertex $b$ to the vertex $c$ is the tropical line segment computed in Example \ref{ex:tls_1}.  We now compute the tropical distance between the vertex $a$ and the $bc$ edge (\ref{eq:tls_ex1}) as
$$
\dtr((1,3),\, \gamma_{bc}(t)) = \begin{cases}
\dtr((1,3),\, (t,t)), & 0 \leq t < 2;\\
\dtr((1,3),\, (t,2)), & 2 \leq t \leq 3.
\end{cases}
$$
When $0 \leq t < 2$,
\begin{align*}
\dtr((1,3),\, (t,t)) & = \max(0, 1-t, 3-t) - \min(0, 1-t, 3-t)\\
& = \left\{\begin{array}{lll}
(3-t) - 0 & = 3-t, & \mbox{~~}0 \leq t < 1\\
(3-t) - (1-t) & = 2, & \mbox{~~} 1 \leq t < 2.
\end{array}
\right.
\end{align*}
When $2 \leq t \leq 3$,
\begin{align*}
\dtr((1,3),\, (t,2)) & = \max(0, 1-t, 1) - \min(0, 1-t, 1)\\
& = 1-(1-t) = t.
\end{align*}
This gives
\begin{equation}
\label{eq:skinny_abc}
\dtr((1,3),\, \gamma_{bc}(t)) = \begin{cases}
3-t, & 0 \leq t < 1,\\
2, & 1 \leq t < 2,\\
t, & 2 \leq t \leq 3.
\end{cases}
\end{equation}

\begin{figure}[h!]
\centering
\includegraphics[scale=0.35]{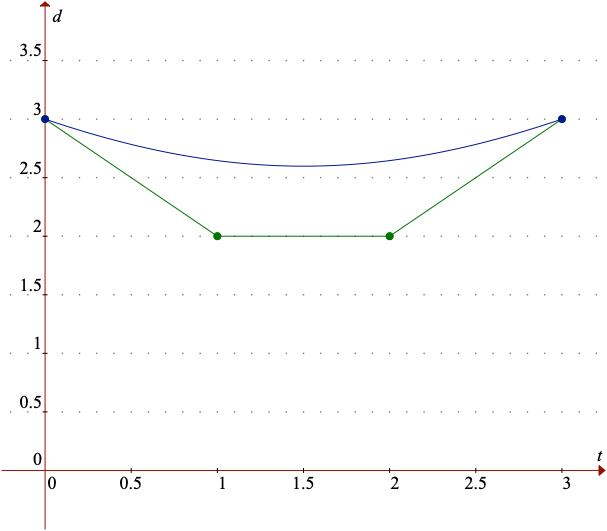}
\caption{Tropical distance function vs. Euclidean distance function from Example~\ref{ex:eqskinny}}
\label{fig:type1eq}
\end{figure}

To study Alexandrov curvature, we need to compare the tropical distance from $a$ to any point on the tropical line segment $\gamma_{bc}(t)$ to the Euclidean distance from $a'$ to any point on the $b'c'$ edge.  We measure the position $t \in [0, 3]$ from $a'$ and compute 
$$
h^2:= d_e^2(a',\gamma_{b'c'}(t)) = \bigg(\frac{3\sqrt{3}}{2}\bigg)^2 + \bigg(\frac{3}{2}-t\bigg)^2 = \frac{27}{4} + \bigg(\frac{3}{2}-t\bigg)^2;
$$
notice that the minimum is achieved at $\frac{3}{2}$.

Comparing $h^2$ to (\ref{eq:skinny_abc}), we find $\dtr^2((1,3),\, \gamma_{bc}(t)) \leq h^2$ and
$$
\dtr(a, \gamma_{bc}(t)) \leq d_e(a', \gamma_{b'c'}(t)).
$$
Figure \ref{fig:type1eq} displays the curves of the tropical and Euclidean distances; we see that the Euclidean distance is always greater than the tropical distance.  We therefore conclude that Alexandrov curvature of the vertex $a$ to the $bc$ edge of this triangle assessed using the tropical distance from the vertex $a$ to the tropical line segment $\gamma_{bc}(t)$ is negative.

Given that both triangles are equilateral under their respective metrics, a verbatim calculation performed on a relabeling of vertices and edges yields the same conclusion.  Thus, the Alexandrov curvature of $(\R^3/\R\one, \dtr)$ computed with respect to the triangle $(a,b,c)$ is negative (nonpositive): this tropical triangle is skinnier than its Euclidean comparison triangle.
\end{example}

\begin{figure}[ht]
\centering
\subfigure[]{
\begin{tikzpicture}
\coordinate (a) at (0,2);
\coordinate (b) at (1,0);
\coordinate (c) at (3,3);
\coordinate (k1) at (0,0);
\coordinate (k2) at (1,3);
\coordinate (k3) at (3,2);
\foreach \coord in {a,b,c}
	\draw[fill=black] (\coord) circle[radius=1pt];
\foreach \coord in {a,c}
	\node[label={[label distance=-2pt]90:$\coord$}]at (\coord) {};
\foreach \coord in {b}
\node[label={[label distance=-20pt]:$\coord$}] at (\coord) {};
\draw (a)--(k1);
\draw (a)--(k2);
\draw (k2)--(c);
\draw (k1)--(b);
\draw (k3)--(b);
\draw (k3)--(c);
\end{tikzpicture}}
\hskip3cm
\subfigure[]{
\begin{tikzpicture}
\coordinate (a) at (1.5,2.598);
\coordinate (b) at (0,0);
\coordinate (c) at (3,0);
\foreach \coord in {a}
	\node[label={[label distance=-15pt]270:$\coord'$}]at (\coord) {};
	\foreach \coord in {b}
	\node[label={[label distance=1pt]270:$\coord'$}]at (\coord) {};
\foreach \coord in {c}
	\node[label={[label distance=1pt]270:$\coord'$}]at (\coord) {};
\draw (a) -- (b) -- (c) -- (a);
\end{tikzpicture}}
\caption{(a) Example of a fat tropical triangle; (b) Corresponding Euclidean comparison triangle.}
\label{fig:fat}
\end{figure}
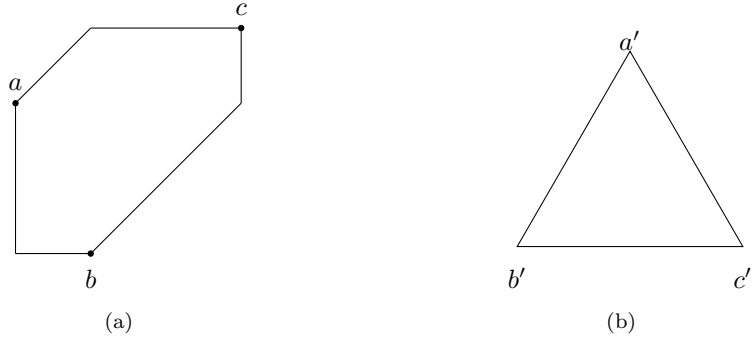

\begin{example}
\label{ex:eqfat}
We compute the Alexandrov curvature of the tropical triangle depicted in Figure \ref{fig:fat}(a). The vertices are $a = (0,2),\, b = (1,0)$, and $c = (3,3)$ in $\R^3/\R\one$. The lengths of the three edges of this tropical triangle are
\begin{align*}
\dtr(a,b) & = \dtr([(0,0,2)],\, [(0,1,0)]) = \max(0, -1, 2) - \min(0, -1, 2) = 3\\
\dtr(b,c) & = \dtr([(0,1,0)],\, [(0,3,3)]) = \max(0, -2, -3) - \min(0, -2, -3) = 3\\
\dtr(a,c) & = \dtr([(0,0,2)],\, [(0,3,3)]) = \max(0, -3, -1) - \min(0, -3, -1) = 3;
\end{align*}
i.e., it is a tropical equilateral triangle.

The Euclidean comparison triangle is depicted in Figure \ref{fig:fat}(b) with vertices at $a' = (\frac{3}{2}, \frac{3\sqrt{3}}{2}),\, b' = (0,0)$, and $c' = (0,3)$, as above in Example \ref{ex:eqskinny}.

\paragraph{Computing tropical and Euclidean distances between the vertex $a$ and the $bc$ edge.}

The $bc$ edge is the tropical line segment connecting the vertex $b$ to the vertex $c$:
$$
\gamma_{bc}(t) = t \odot [(0,1,0)] \oplus [(0,3,3)] = (\min(t,0),\, \min(t+1,3),\, \min(t,3)).
$$
For $t\in [0,3]$, we have
$$
\gamma_{bc}(t) = \begin{cases}
[(0, t+1, t)], & 0 \leq t < 2;\\
[(0, 3, t)], & 2 \leq t \leq 3.
\end{cases}
$$

The tropical distance between the vertex $a$ and the $bc$ edge is
$$
\dtr([(0,0,2)],\, \gamma_{vw}(t)) = \begin{cases}
\dtr([(0,0,2)],\, [(0, t+1, t)]), & 0 \leq t < 2;\\
\dtr([(0,0,2)],\, [(0, 3, t)]), & 2 \leq t \leq 3.
\end{cases}
$$
When $0 \leq t < 2$,
\begin{align*}
\dtr([(0,0,2)],\, [(0, t+1, t)]) & = \max(0,-t-1, 2-t) - \min(0,-t-1, 2-t)\\
& = 2-t - (-t-1) = 3.
\end{align*}
When $2 \leq t \leq 3$,
\begin{align*}
\dtr([(0,0,2)],\, [(0, 3, t)]) & = \max(0,-3, 2-t) - \min(0,-3, 2-t)\\
& = 0 - (-3) = 3.
\end{align*}
So $\dtr([(0,0,2)],\, \gamma_{vw}(t)) = 3$ for all $t \in [0,3]$.

\begin{figure}[h!]
\centering
\includegraphics[scale=0.35]{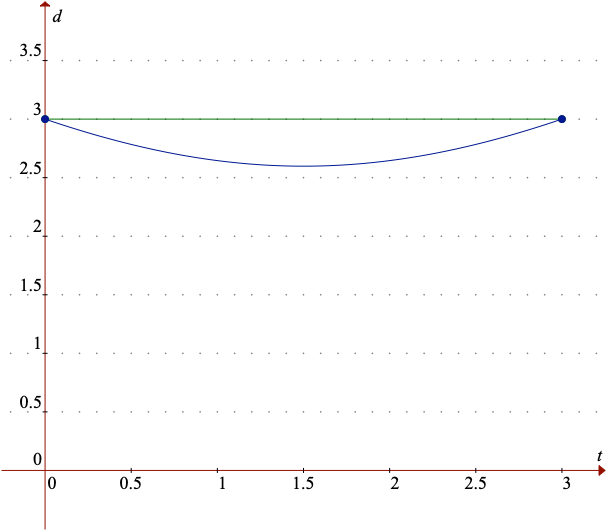}
\caption{Tropical distance function vs. Euclidean distance function from Example~\ref{ex:eqfat}}
\label{fig:type5eq}
\end{figure}

As in Example \ref{ex:eqskinny}, we compare the tropical distance from $a$ to any point on the tropical line segment $\gamma_{bc}(t)$ to the Euclidean distance from $a'$ to any point on the $b'c'$ edge.  We measure the position $t \in [0,3]$ from $u_0$ and compute
$$
h^2:= d_e^2(a',t) = \bigg(\frac{3\sqrt{3}}{2}\bigg)^2 + \bigg(\frac{3}{2}-t\bigg)^2 = \frac{27}{4} + \bigg(\frac{3}{2}-t\bigg)^2.
$$
Since $\dtr^2((0,0,2),\, \gamma_{ac}(t)) = 9 \geq h^2$ for all $t$, we conclude that the Alexandrov curvature assessed using the tropical distance from the vertex $a$ to the tropical line segment $\gamma_{bc}(t)$ is positive.

Again, given that both triangles are equilateral under their respective metrics, we may similarly to Example \ref{ex:eqskinny} conclude that the Alexandrov curvature of $(\R^3/\R\one, \dtr)$ computed with respect to the triangle $(a,b,c)$ is positive (nonnegative): this tropical triangle is fatter than its Euclidean comparison triangle.
\end{example}

Since the definition of Alexandrov curvature relies on the Euclidean comparison triangle, here, we establish a first general result from Euclidean trigonometry that we will repeatedly use throughout this paper.

\begin{figure}
\centering
\begin{tikzpicture}
\coordinate (a') at (0,0);
\coordinate (b') at (2,3);
\coordinate (c') at (5,0);
\coordinate (x) at (3,0);
\foreach \coord in {a,b,c}
	\draw[fill=black] (\coord') circle[radius=1pt];
\foreach \coord in {a,b,c}
	\node[label={[label distance=-2pt]90:$\coord'$}]at (\coord') {};
\draw (a')--(b')--(c')--(a');
\draw (b')--(x);
\end{tikzpicture}
\put(-50,50){{$A$}}
\put(-125,50){{$C$}}
\put(-90,-7){{$B$}}
\put(-70,35){{$h$}}
\put(-110,7){{$t$}}
\put(-140,7){{$\theta$}}
\put(-64,-5){{$x$}}
\caption{Euclidean Vertex-to-Edge Distance.}
\label{fig:euclidist}
\end{figure}
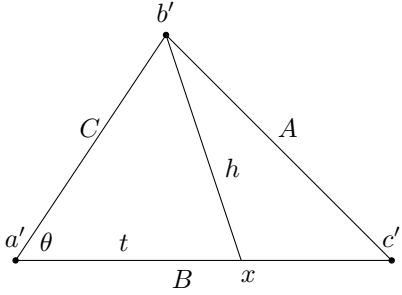

\begin{lemma}\label{lem:euclidist}
Consider a Euclidean triangle in the plane with vertices $a', b', c'$ and opposite side lengths $A,B,C$ respectively (Figure \ref{fig:euclidist}).  If $x$ is the point on the side $a'c'$ such that $\|a'-x\| = t$, then the distance $h$ from $b'$ to $x$ is given by
\begin{equation*}
\label{eq:euclidist}
h^2 = t^2 + \frac{A^2-B^2-C^2}{B}t + C^2.
\end{equation*}
\end{lemma}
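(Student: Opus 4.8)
The plan is to derive \eqref{eq:euclidist} from two applications of the Law of Cosines, linked through the angle $\theta = \angle b'a'c'$ at the apex $a'$ (Figure \ref{fig:euclidist}). The point $x$ lies on the side through $a'$ of length $B$, at distance $t$ from $a'$, so the segment $a'x$ and the side $a'b'$ of length $C$ meet at $a'$ with exactly the angle $\theta$ of the full triangle; this shared angle is what lets me transfer information from the whole triangle to the sub-triangle $a'b'x$.

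First I would work inside the sub-triangle $a'b'x$, whose two sides meeting at $a'$ have the known lengths $\|a'b'\| = C$ and $\|a'x\| = t$ with included angle $\theta$. The Law of Cosines gives immediately
\begin{equation*}
h^2 = \|b' - x\|^2 = C^2 + t^2 - 2Ct\cos\theta .
\end{equation*}
This already has the correct shape: a monic quadratic in $t$ with constant term $C^2$, so only the linear coefficient remains to be identified.

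Second I would evaluate $\cos\theta$ from the full triangle $a'b'c'$, where $\theta$ is the angle at $a'$ subtending the opposite side of length $A$ and flanked by the sides of lengths $B$ and $C$. The Law of Cosines there reads $A^2 = B^2 + C^2 - 2BC\cos\theta$, so $\cos\theta = (B^2 + C^2 - A^2)/(2BC)$. Substituting and cancelling $2C$ against $2BC$ converts the linear coefficient into $-(B^2 + C^2 - A^2)/B = (A^2 - B^2 - C^2)/B$, which is precisely \eqref{eq:euclidist}. Equivalently, one can place $a'$ at the origin with $b' = (C\cos\theta, C\sin\theta)$ and $x = (t, 0)$ and expand $\|b' - x\|^2$ directly, reaching the same identity without naming the Law of Cosines.

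I do not anticipate a genuine obstacle here, since this is elementary planar trigonometry valid for every position $t \in [0, B]$ of $x$ along the side. The only step demanding care is the side-to-vertex bookkeeping: confirming that the denominator is $B$ rather than $C$, and that the adjacent side contributing the constant term $C^2$ is $a'b'$ rather than $a'c'$. Getting this correspondence right is exactly what produces the asymmetric coefficient $(A^2 - B^2 - C^2)/B$ instead of its $B \leftrightarrow C$ counterpart.
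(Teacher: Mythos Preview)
Your proposal is correct and follows essentially the same route as the paper: two applications of the Law of Cosines at the shared angle $\theta=\angle b'a'c'$, one in the full triangle to extract $\cos\theta$ and one in the sub-triangle $a'b'x$ to express $h^2$, then a substitution. The only cosmetic difference is the order in which you apply them.
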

\begin{proof}
We obtain the result by applying the Law of Cosines twice.  Let $\theta := \angle b'a'c'$; then $A^2 = B^2 + C^2 - 2BC\cos\theta$, from which we obtain
\begin{equation}
\label{eq:cosalpha}
-2C \cos \theta = \frac{A^2 - B^2 - C^2}{B}.
\end{equation}
Similarly,
\begin{align*}
h^2 & = t^2 + C^2 - 2tC \cos \theta\\
& = t^2 + C^2 + \frac{A^2 - B^2 - C^2}{B}t,
\end{align*}
by substituting (\ref{eq:cosalpha}), as desired.
\end{proof}

Examples \ref{ex:eqskinny} and \ref{ex:eqfat}, together with Corollary \ref{cor:cat}, show that characterizing the curvature behavior of $(\R^n/\R\one, \dtr)$ is not straightforward.  They motivate the driving question of this paper: what is the general behavior of Alexandrov curvature under the tropical metric?

Before approaching this question, we make a quick detour to discuss combinatorial types of tropical triangles.

\subsection{Combinatorial Types of Tropical Triangles in the Plane}

Triangles in tropical geometry are interesting objects in their own right that have been previously studied, for example in \cite{ansola2009note}; in our work, they are fundamental.  In this paper, our triangles have edges given by tropical line segments as defined in Definition \ref{def:tls}: from Lemma \ref{lemma:tls_types}, since there are only three possible types of tropical line segment, this restricts the number of combinatorial types of tropical triangles as a combination of the types of tropical line segments. Mike Develin and Bernd Sturmfels show that in the plane, there are five combinatorial types of tropical triangles \citep{develin2004tropical}, illustrated in Figure \ref{fig:5types}. They proved in general that the combinatorial types of tropical complexes generated by a set of $r$ vertices in $\mathbb{R}^n/\mathbb{R}\one$ are in natural bijection with the regular polyhedral subdivisions of the product of simplices $\Delta_{n-1} \times \Delta_{r-1}$. See also connections to tropical convexity, and point configurations in a Bruhat--Tits building by Michael Joswig, Bernd Sturmfels, and Josephine Yu; and Dustin Cartwright, Bernd Sturmfels and co-authors \citep{joswig2007josephine,cartwright2011mustafin}. 

\begin{figure}
\subfigure[]{
\centering
\resizebox{30mm}{!}{
\begin{tikzpicture}
\coordinate (a) at (0,0);
\coordinate (b) at (2,3);
\coordinate (c) at (4,1);
\coordinate (k1) at (2,2);
\coordinate (k2) at (2,1);
\coordinate (k3) at (1,1);
\foreach \coord in {a,b,c}
	\draw[fill=black] (\coord) circle[radius=1pt];
\foreach \coord in {a,b,c}
	\node[label={[label distance=-2pt]90:$\coord$}]at (\coord) {};
\draw (a)--(k1);
\draw (b)--(k2);
\draw (c)--(k3);
\end{tikzpicture}}}
\hskip5mm
\subfigure[]{
\centering
\resizebox{30mm}{!}{
\begin{tikzpicture}
\coordinate (a) at (0,0);
\coordinate (b) at (3,2);
\coordinate (c) at (4,1);
\coordinate (k1) at (2,2);
\coordinate (k2) at (3,1);
\coordinate (k3) at (1,1);
\foreach \coord in {a,b,c}
	\draw[fill=black] (\coord) circle[radius=1pt];
\foreach \coord in {a,b,c}
	\node[label={[label distance=-2pt]90:$\coord$}]at (\coord) {};
\draw (a)--(k1);
\draw (b)--(k2);
\draw (c)--(k3);
\draw (k1)--(b);
\end{tikzpicture}}}
\hskip5mm
\subfigure[]{
\resizebox{30mm}{!}{
\begin{tikzpicture}
\coordinate (a) at (0,0);
\coordinate (b) at (3,1);
\coordinate (c) at (5,2);
\coordinate (k1) at (2,2);
\coordinate (k2) at (4,2);
\coordinate (k3) at (1,1);
\foreach \coord in {a,b,c}
	\draw[fill=black] (\coord) circle[radius=1pt];
\foreach \coord in {a,b,c}
	\node[label={[label distance=-2pt]90:$\coord$}]at (\coord) {};
\draw (a)--(k1);
\draw (b)--(k2);
\draw (c)--(k1);
\draw (k3)--(b);
\end{tikzpicture}}}
\hskip5mm
\subfigure[]{
\resizebox{30mm}{!}{
\begin{tikzpicture}
\coordinate (a) at (0,1);
\coordinate (b) at (1,0);
\coordinate (c) at (4,2);
\coordinate (k1) at (0,0);
\coordinate (k2) at (1,2);
\coordinate (k3) at (3,2);
\foreach \coord in {a,b,c}
	\draw[fill=black] (\coord) circle[radius=1pt];
\foreach \coord in {a,b,c}
	\node[label={[label distance=-2pt]90:$\coord$}]at (\coord) {};
\draw (a)--(k1)--(b)--(k3);
\draw (a)--(k2)--(c);
\end{tikzpicture}}}
\hskip16mm
\subfigure[]{
\resizebox{20mm}{!}{
\begin{tikzpicture}
\coordinate (a) at (0,1);
\coordinate (b) at (1,0);
\coordinate (c) at (2,2);
\coordinate (k1) at (0,0);
\coordinate (k2) at (1,2);
\coordinate (k3) at (2,1);
\foreach \coord in {a,b,c}
	\draw[fill=black] (\coord) circle[radius=1pt];
\foreach \coord in {a,b,c}
	\node[label={[label distance=-2pt]90:$\coord$}]at (\coord) {};
\draw (a)--(k1)--(b)--(k3)--(c)--(k2)--(a);
\end{tikzpicture}}}
\caption{Combinatorial Types of Tropical Triangles in the Plane: (a) T1; (b) T2; (c) T3; (d) T4; (e) T5.}
\label{fig:5types}
\end{figure}
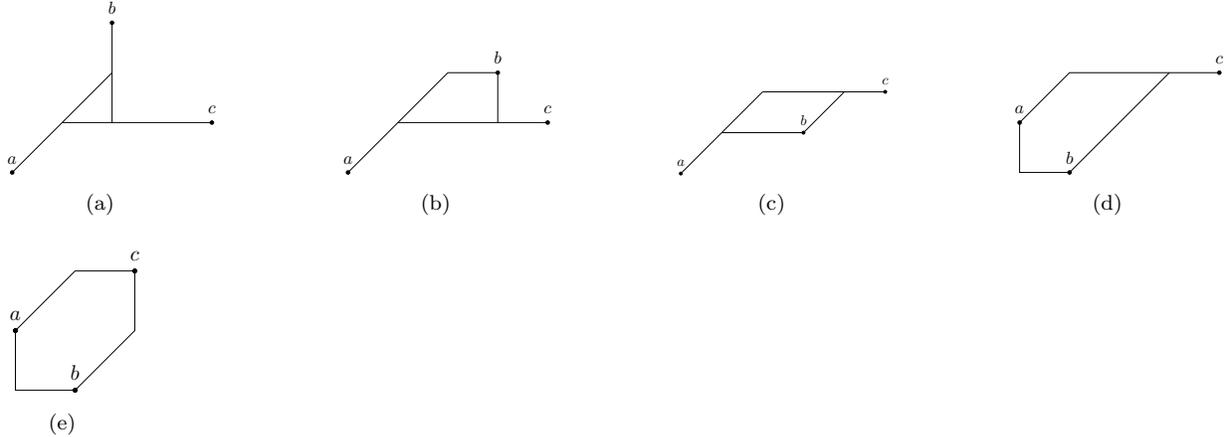

\subsection{Regions of Skinniness and Fatness: Sampling Tropical Triangles}

From Examples \ref{ex:eqskinny} and \ref{ex:eqfat}, a first natural assumption would be that type T1 triangles are always skinny, while type T5 triangles are always fat.  It turns out that the story is not so simple.

If we think of all tropical triangles as parametrized by their coordinates
$$
(a_1,a_2,b_1,b_2,c_1,c_2) \in \R^6
$$
(including degenerate collinear cases), the conditions for having a positive or negative Alexandrov curvature are given by a set of linear and quadratic inequalities that depend on the specific configuration of the triangle. In particular, these regions are \emph{semialgebraic sets}. We would like for these to have nonempty interior and in particular, that the set of triangles with a specific curvature is not of measure zero.  We would like to wiggle the coordinates in Examples \ref{ex:eqskinny} (type T1) and \ref{ex:eqfat} (type T5) and preserve the curvature; the plots for these examples suggest that this is possible, because there is a clear gap between the curves of the tropical and Euclidean distances.  From the examples themselves, this is not obvious since these are equilateral and there is only one Euclidean distance function to which we compare the tropical distance.  What happens when we have scalene triangles?



\begin{example}
\label{ex:T1_skinny}
Take the vertex set in $\R^3/\R\one$:
$$
a = (0,0), \qquad b = (2,4), \qquad c = (5,1).
$$
This is a scalene triangle of type T1 illustrated in Figure \ref{fig:type1scalene}(d); it has edge lengths $\dtr(a,b) = 4$, $\dtr(a,c) = 5$, and $\dtr(b,c) = 6$.  We compute all the tropical distances between each vertex to its opposite edge and compare them to their corresponding Euclidean distances.



We see from the comparison plots of the tropical versus Euclidean distances in Figure \ref{fig:type1scalene} that this scalene tropical triangle is skinny, i.e., it has negative (nonpositive) Alexandrov curvature, since the curves for the tropical distances always lie below the curves for the Euclidean distances.
\end{example}

\begin{figure}[h!]
\subfigure[]{
\centering
\includegraphics[scale=0.27]{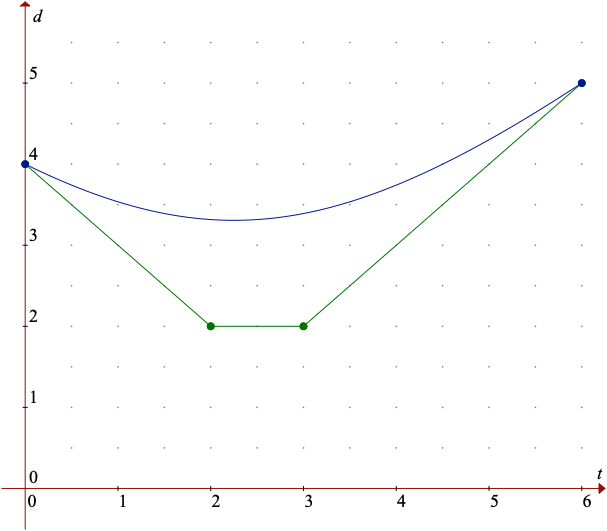}}
\hskip3mm
\subfigure[]{
\centering
\includegraphics[scale=0.27]{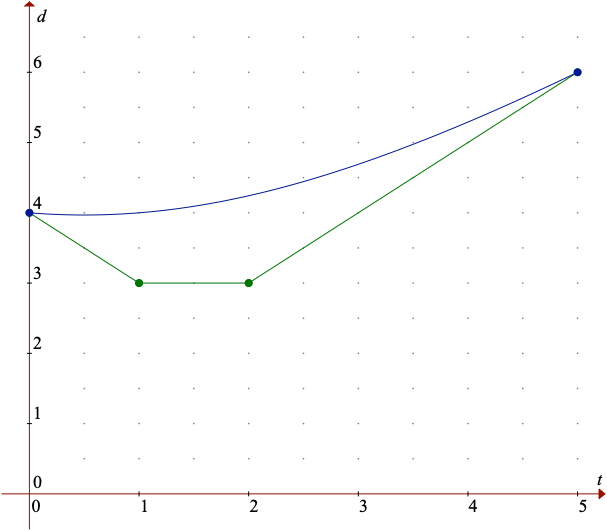}}
\subfigure[]{
\centering
\includegraphics[scale=0.27]{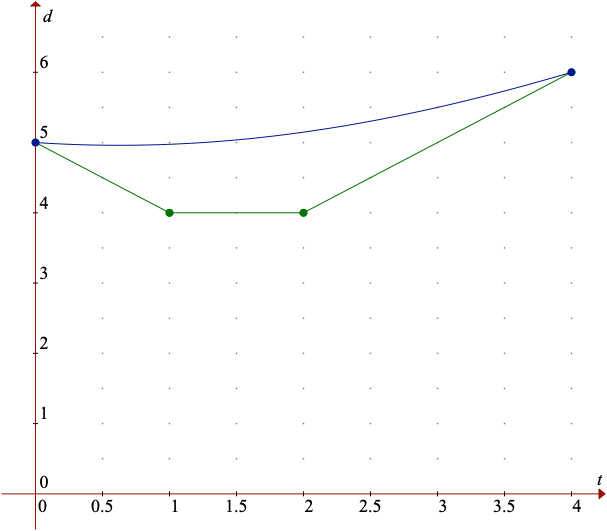}}
\hskip50mm
\subfigure[]{
\centering
\begin{tikzpicture}
\coordinate (a) at (0,0);
\coordinate (b) at (2,4);
\coordinate (c) at (5,1);
\coordinate (k1) at (2,2);
\coordinate (k2) at (2,1);
\coordinate (k3) at (1,1);
\foreach \coord in {a,b,c}
	\draw[fill=black] (\coord) circle[radius=1pt];
\foreach \coord in {a,b,c}
	\node[label={[label distance=-2pt]90:$\coord$}]at (\coord) {};
\draw (a)--(k1);
\draw (b)--(k2);
\draw (c)--(k3);
\end{tikzpicture}}
\caption{Tropical distance function vs. Euclidean distance function from Example~\ref{ex:T1_skinny}: (a) vertex $a$ to side $bc$; (b) vertex $b$ to side $ac$; (c) vertex $c$ to side $ab$; (d) Type 1 tropical triangle studied in Example \ref{ex:T1_skinny}.}
\label{fig:type1scalene}
\end{figure}


\begin{example}
\label{ex:T5_222}
Consider the vertex set
$$
a = (0,4), \qquad b = (3,0), \qquad c = (5,6).
$$
This is a scalene triangle of type T5, illustrated in Figure \ref{fig:type5scalene}(d); it has edge lengths $\dtr(a,b) = 7$, $\dtr(a,c) = 5$, and $\dtr(b,c) = 6$.  We compute all the tropical distances between each vertex to its opposite edge and compare them to their corresponding Euclidean distances.



From the comparison plots of the tropical versus Euclidean distances shown in Figure \ref{fig:type5scalene}, we see that this scalene tropical triangle is fat, i.e., it has positive (nonnegative) Alexandrov curvature, since the curves for the tropical distances always lie above the curves for the Euclidean distances.
\end{example}

\begin{figure}[h!]
\subfigure[]{
\centering
\includegraphics[scale=0.27]{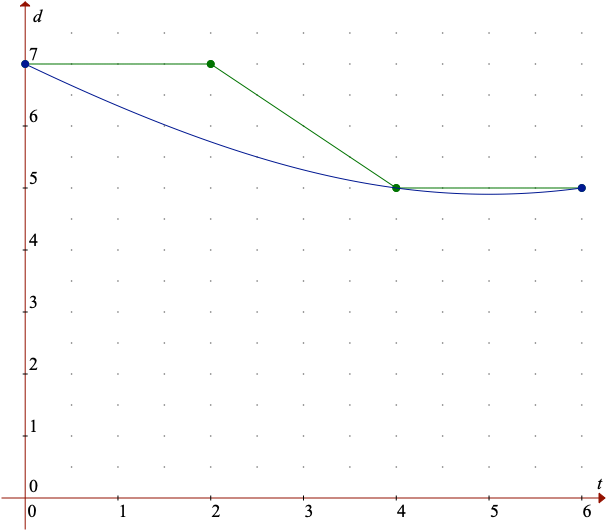}}
\hskip3mm
\subfigure[]{
\centering
\includegraphics[scale=0.27]{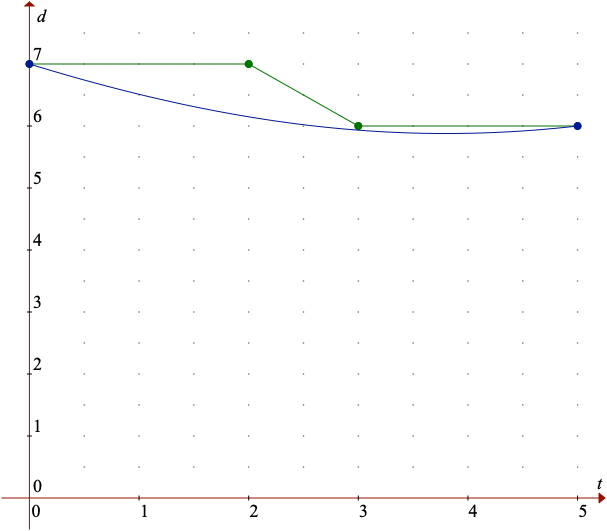}}
\subfigure[]{
\centering
\includegraphics[scale=0.27]{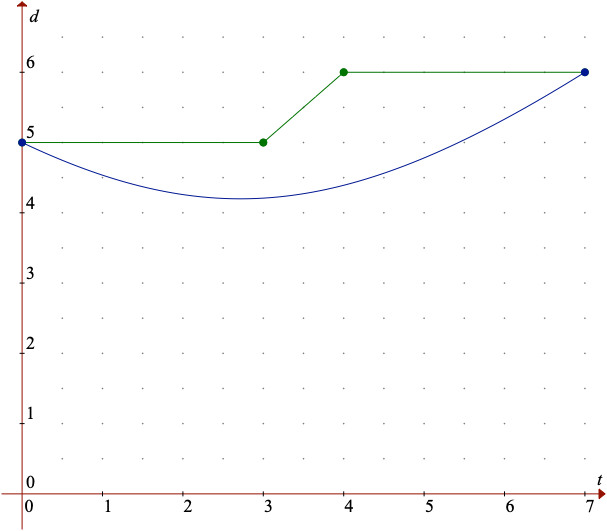}}
\hskip50mm
\subfigure[]{
\begin{tikzpicture}
\coordinate (a) at (0,4);
\coordinate (b) at (3,0);
\coordinate (c) at (5,6);
\coordinate (k1) at (0,0);
\coordinate (k2) at (2,6);
\coordinate (k3) at (5,2);
\foreach \coord in {a,b,c}
	\draw[fill=black] (\coord) circle[radius=1pt];
\foreach \coord in {a,b,c}
	\node[label={[label distance=-2pt]90:$\coord$}]at (\coord) {};
\draw (a)--(k1)--(b)--(k3)--(c)--(k2)--(a);
\end{tikzpicture}}
\caption{Tropical distance function vs. Euclidean distance function from Example~\ref{ex:T5_222} (a) vertex $a$ to side $bc$; (b) vertex $b$ to side $ac$; (c) vertex $c$ to side $ab$; (d) Type 5 tropical triangle studied in Example \ref{ex:T5_222}.}
\label{fig:type5scalene}
\end{figure}

The example just studied above is particularly interesting, because we see in Figure \ref{fig:type5scalene}(a), the two distance curves touch; the curves in Figure \ref{fig:type5scalene}(b) come very close.  This indicates that the directions of the inequalities may not be preserved throughout the whole line segment and Alexandrov curvature may turn out to be undefined for some triangles.  The implications of undefined Alexandrov curvature will be addressed further on in the paper in Section~\ref{sec:undefined}.  Undefined curvature can happen with both type T1 and T5 triangles, but with very different occurrences: it turns out it is very easy to find T1 triangles that are skinny but there do exist some that are undefined (see Example \ref{ex:T1_counterexample} immediately below); conversely, it is very difficult to find T5 triangles that are fat, they are almost always undefined.  We conduct numerical experiments to understand how often T1 triangles are skinny and undefined, and similarly, how often T5 triangles are fat and undefined.

\begin{example}
\label{ex:T1_counterexample}
Consider the tropical triangle in $\R^3/\R\one$ defined by the vertices
$$
a = (0,0), \qquad b = (448, 449), \qquad c = (452, 256). 
$$

The tropical distance from the vertex $b$ to the $ac$ edge is
$$
\dtr((b_1, b_2),\, \gamma_{ac}(t)) = \begin{cases}
449-t, & 0 \leq t \leq 256;\\
193, & 256 \leq t \leq 448;\\
t-255, & 448 \leq t \leq 452,
\end{cases}
$$
while the squared corresponding Euclidean distances on the comparison triangle from vertex $b'$ to the $a'c'$ edge is
$$
d_e((b'),\, \gamma_{a'c'}(t)) = t^2 - \frac{91774}{113}t + 201601.
$$
Here, the tropical distance begins below the corresponding Euclidean distance (i.e., the triangle starts out skinny), but then coincide, and the tropical distance then continues {\em greater} than the Euclidean distance (i.e., it becomes fat), as shown in Figure \ref{fig:type1counterex}, before eventually dipping below the curve of the Euclidean distance again (i.e., it becomes skinny again).  So, we have a change of sign in curvature over an interval and hence, Alexandrov curvature is not well-defined for this triangle.  Note that the change of sign only occurs over a very small interval as shown in Figure \ref{fig:type1counterex}(a) and magnified in Figure \ref{fig:type1counterex}(b).

\begin{figure}[h!]
\subfigure[]{
\centering
\includegraphics[scale=0.27]{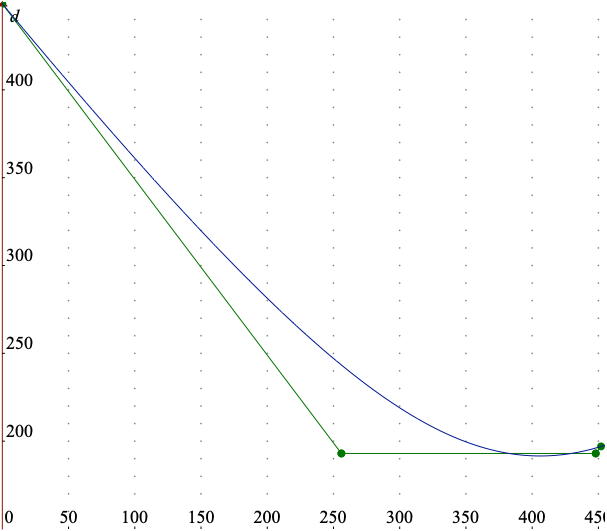}}
\hskip35mm
\subfigure[]{
\centering
\includegraphics[scale=0.27]{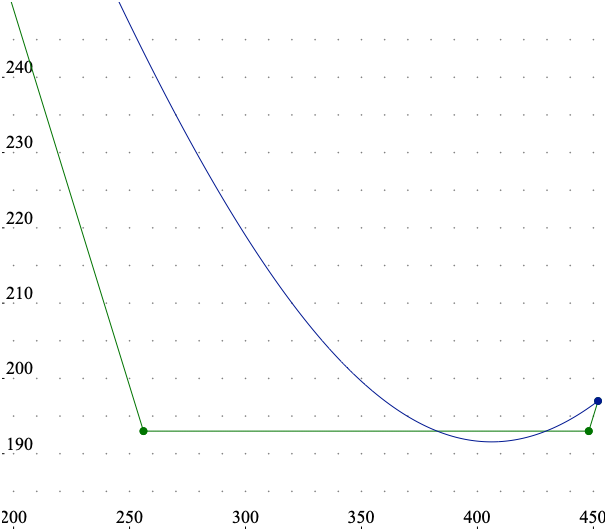}}
\caption{Tropical distance function vs. Euclidean distance function from Example~\ref{ex:T1_counterexample} for side $ac$: (a) Comparison of the two distance functions; (b) Magnified portion to see where the distance functions cross and the curvature of this triangle becomes undefined.}
\label{fig:type1counterex}
\end{figure}

\end{example}

\paragraph{A Procedure for Sampling Triangles.}

Each combinatorial type of tropical triangle is defined by a set of inequalities on its vertex set.  Thus, in order to sample triangles by type and assess their Alexandrov curvature, we need to specify the set of inequalities to define a sampling region, sample three random vertices from this region, draw the tropical line segments between these vertices, and compute the curvature of the resulting tropical triangle.  However, there are several observations to be mindful of when implementing such a procedure.





Here and throughout the rest of the section we consider a tropical triangle in $\R^3/\R\one$ with vertices $a=(a_1,a_2), b= (b_1,b_2), c=(c_1,c_2)$ using the notational convention of Remark~\ref{rem:notation}.

The following set of inequalities on the coordinates of the vertices define a type T1 tropical triangle:
\begin{center}
\begin{tabular}{ccccc}
$a_1$ & $<$ &  $b_1$ & $<$ & $c_1$,\\
$a_2$ & $<$ & $b_2$ & $<$ & $c_2$,\\
$b_1 - b_2$ & $<$ & $a_1 - a_2$ & $<$ & $c_1 - c_2$.\\
\end{tabular}
\end{center}
Notice that Examples \ref{ex:eqskinny} and \ref{ex:T1_skinny} where Alexandrov curvature is well-defined study two different cases, defined by differing directions of inequality for the comparison of the differences $b_1 - a_1$ versus $c_2 - a_2$.  

The following set of inequalities on the coordinates of the vertices define a type T5 tropical triangle:
\begin{center}
\begin{tabular}{ccccc}
$a_1$ & $<$ &  $b_1$ & $<$ & $c_1$,\\
$b_2$ & $<$ & $a_2$ & $<$ & $c_2$,\\
$a_1 - a_2$ & $<$ & $c_1 - c_2$ & $<$ & $b_1 - b_2$.\\
\end{tabular}
\end{center}
Similarly, the Examples \ref{ex:eqfat} and \ref{ex:T5_222} also study different cases defined by inequalities.  Contrary to triangle type T1 where there are only two cases, however, it turns out that there are six well-defined cases for triangle type T5.  These are characterized by the location of the vertex with respect to the bending point on its opposite edge, i.e., whether the vertices lie above or below, or before or after, the bending point on the opposite edge.  Thus, for each inequality set defining T1 triangles and T5 triangles, there are different cases to also encode in defining the sampling region for each triangle type.  The vertices then need to be permuted to take into account the issue of labeling when sampling the vertices randomly.

Furthermore, on each triangle type and case set, there are then planar isometries to take into account: rotations and reflections need to be considered.

Finally, note that the regions depicted by the triangle types, case sets, and planar isometries are unbounded.  To address this issue, we set a probability simplex constraint so that coefficients are nonnegative and sum to one.

These triangle vertex inequalities with the considerations described above describe a polyhedral region.  To sample from such a region, we implement the {\em hit and run algorithm} proposed by Boneh and Golan in 1979 and independently by Smith in 1980, which is designed to sample from bounded convex regions and for which there exists an \texttt{R} package \citep{boneh1979constraints,smith1980monte,TERVONEN2013552}.  The algorithm is a Markovian sampling technique that employs a random walk on the set defined by the input inequalities with a target distribution that is uniform over the set.  This particular choice of stationary distribution ensures that following the chain for a sufficiently large number of steps will result in the sampled points being approximately uniformly spread out over the set.

Table \ref{tab:har} shows the average occurrences of negatively and positively curved triangles and triangles with undefined curvature from sampled type T1 and T5 triangles for 10 runs each with 1000 randomly sampled triangles.

Our proposed procedure to sample tropical triangles by type is important and useful for further studies in random settings of so-called {\em transversal tropical triangles} by Ansola and de la Puente.  These correspond to Type 5 triangles and are of particular interest, because its vertices are tropically independent, and the coordinates of the vertices determine a tropically regular matrix and give triangulations of the tropical plane \citep{ansola2009note}.  Applying our proposed sampling procedure together with their results would therefore allow for random triangulations of the tropical plane.  
Random triangulations of the tropical plane may increase the reach of tropical geometry in applications.  One particular application of interest corresponds to trees: classically, it is known that the Euclidean minimum spanning set tree of a set of points is a subset of the Delaunay triangulation of the same point and this relationship has been used to improve computational efficiency \citep{aurenhammer2013voronoi}.  Given the intimate connection between tropical geometry and trees, a tropical version of this relationship may be useful for similar computations of trees in the random setting, providing a bridge towards real data applications.



\begin{table}
\begin{center}
\caption{Sampled Type T1 and T5 Triangles and their Alexandrov Curvatures} \label{tab:har}      
\begin{tabular}{l|llll|l}
\hline\noalign{\smallskip}
Type & zero & positive & negative & undefined & total  \\
\noalign{\smallskip}\hline\noalign{\smallskip}
1 & 0\% & 0\% & 97\% & 3\% & 100\% \\
5 & 0\% & 14.3\% & 0\% & 85.7\% & 100\% \\
\noalign{\smallskip}\hline
\end{tabular}
\end{center}
\end{table}


\subsection{Fatness Everywhere: Triangle type T3}
\label{sec:flat_is_fat}

Up until now, we have established the existence of open regions for both negative and positive well-defined Alexandrov curvature.  We now give a definitive result for a setting where Alexandrov curvature is well-defined and positive (nonnegative) everywhere.

\begin{theorem}
\label{thm:flat_is_fat}
The tropical triangle of type T3 defined by the following set of inequalities always exhibits positive (nonnegative) Alexandrov curvature:
$$a_1 < b_1 < c_1, \qquad a_2 < b_2 < c_2, \qquad c_1 - c_2 < b_1 - b_2.$$
In other words, a tropical triangle of type T3 is always fat.
\end{theorem}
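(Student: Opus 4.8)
The plan is to reduce the statement to three one-dimensional comparisons, one for each vertex against its opposite edge, and in each case to show that the tropical vertex-to-edge distance dominates its Euclidean comparison pointwise. First I would use the defining inequalities $a_1<b_1<c_1$, $a_2<b_2<c_2$, $c_1-c_2<b_1-b_2$ together with Lemma~\ref{lemma:tls_types} to write down all three edges explicitly: reading off the sign pattern of the differences $a_1-a_2,\,b_1-b_2,\,c_1-c_2$ fixes which of L1, L2, L3 each of $ab$, $ac$, $bc$ is, and hence gives the two-piece formula and bending point for each tropical line segment $\gamma(t)$ parametrized by arclength $t$. By Lemma~\ref{lem:scale} the ``small enough'' clause of Definition~\ref{def:alexandrov} may be ignored, and since $\dtr$ depends only on coordinate differences I may normalize $a=(0,0)$ as in Remark~\ref{rem:notation} to streamline the computation.

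Next, for each vertex $V\in\{a,b,c\}$ and its opposite edge $\gamma(t)$, $t\in[0,L]$, I would compute $\dtr(V,\gamma(t))$ directly from Definition~\ref{def:tropicalmetric}. Because $\gamma$ is piecewise affine and $\dtr$ is a max-minus-min of affine functions, each such distance is an explicit continuous piecewise-linear function of $t$, with breakpoints at the bending point of $\gamma$ and at the finitely many values of $t$ where the coordinates realizing the max and min of $V-\gamma(t)$ switch. At the endpoints $t=0$ and $t=L$ the value equals the adjacent tropical side lengths $\dtr(V,P)$ and $\dtr(V,Q)$.

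For the Euclidean side I would invoke Lemma~\ref{lem:euclidist} with the comparison triangle whose side lengths are the tropical edge lengths found above, giving the squared comparison distance $h^2(t)=t^2+\tfrac{A^2-B^2-C^2}{B}\,t+C^2$. The key observation is that $h(t)=d_e(V',\gamma'(t))$ is a \emph{convex} function of $t$, being the norm of an affine map (the distance from a fixed point to a point traversing a straight segment at unit speed), while $h(0)=\dtr(V,P)$ and $h(L)=\dtr(V,Q)$ agree with the endpoint values of the tropical distance by construction of the comparison triangle; hence the two functions share the same chord on $[0,L]$. I would then try to prove that each tropical distance function $t\mapsto\dtr(V,\gamma(t))$ is \emph{concave} on $[0,L]$, i.e.\ that its successive slopes are nonincreasing. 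Granting concavity, a concave function lies above its chord while the convex $h$ lies below that same chord, so
$$\dtr(V,\gamma(t))\;\ge\;\mathrm{chord}(t)\;\ge\;h(t)\qquad\text{for all }t\in[0,L],$$
which is exactly inequality~\eqref{eq:alexandrovpos}; carrying this out for all three vertex-edge pairs yields positive (nonnegative) Alexandrov curvature.

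The main obstacle is establishing this concavity from the T3 inequalities: this is precisely where the combinatorial type enters, since for other types (e.g.\ the skinny T1 of Example~\ref{ex:eqskinny}) some of these functions are instead convex and the sandwich fails. Concretely, I expect to verify concavity by a finite case analysis of the slope on each linear piece, tracking which coordinate attains the max and which the min of $V-\gamma(t)$ as $t$ increases and checking that each transition only decreases the slope. Should concavity not hold cleanly for every pair, the fallback is the direct route: on each linear piece both $\dtr^2(V,\gamma(t))$ and $h^2(t)$ are quadratics, so their difference is quadratic there and its nonnegativity can be read off from the values at the breakpoints together with the sign of the leading coefficient, using that equality is forced at $t=0$ and $t=L$. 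Either way the content is the same inequality; the concavity formulation is the cleaner one and is what I would attempt to push through first.
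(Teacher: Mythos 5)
Your main route has a genuine gap: the concavity claim on which the chord sandwich rests is false for type T3. Under the T3 inequalities all relevant edges are of type L1, so the tropical side lengths satisfy $\dtr(a,b)+\dtr(b,c)=\dtr(a,c)$ and the Euclidean comparison triangle is \emph{degenerate}: $b'$ lies on the segment $a'c'$, and $d_e(b',\gamma_{a'c'}(t))=|b_1-a_1-t|$ as in \eqref{eq:T3_de}. The tropical distance $\dtr(b,\gamma_{ac}(t))$ coincides with this Euclidean distance on entire pieces: on the initial piece both equal $b_1-a_1-t$ (for $0\le t\le b_2-a_2$), and on the final piece both equal $t-(b_1-a_1)$. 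In between, its slopes run $-1,0,+1,0,+1$ (or $-1,0,-1,0,+1$ in the case $c_2-a_2<b_1-a_1$), which is not a non-increasing sequence; the function is neither concave nor above the chord joining $(0,\,b_1-a_1)$ to $(c_1-a_1,\,c_1-b_1)$ --- indeed it agrees with the convex function $|b_1-a_1-t|$, which lies strictly below that chord, on whole intervals. So no argument of the form ``tropical $\ge$ chord $\ge$ Euclidean'' can succeed here; the correct picture is that two of the three vertex-to-edge comparisons are exact equalities (which still satisfy the weak inequality \eqref{eq:alexandrovpos}), and only the $b$-to-$ac$ comparison is strictly fat, and only in its middle portion.

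Your fallback, by contrast, is essentially sound and is close to what the paper actually does. Since the tropical distance has slopes in $\{0,\pm 1\}$ and the Euclidean squared distance is a monic quadratic in $t$, the difference $\dtr^2-d_e^2$ on each linear piece has leading coefficient $0$ or $-1$, hence is linear or concave there, so nonnegativity does reduce to checking the breakpoints. But that check is the entire content of the theorem and is where the T3 inequalities must be deployed: one must verify, for instance, $b_1-b_2+a_2-a_1\ge b_1-a_1-t$ and $c_2-b_2> a_1-b_1+t$ on the appropriate $t$-ranges, and split into the two cases $b_1-a_1<c_2-a_2$ and $c_2-a_2<b_1-a_1$, exactly as in the paper's proof of Theorem~\ref{thm:flat_is_fat}. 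The paper also avoids squaring altogether by exploiting the degeneracy of the comparison triangle, comparing piecewise-linear functions directly. So: discard the concavity framing, promote the fallback to the main argument, and note the degenerate comparison triangle at the outset --- it is the structural fact that makes the whole computation tractable.
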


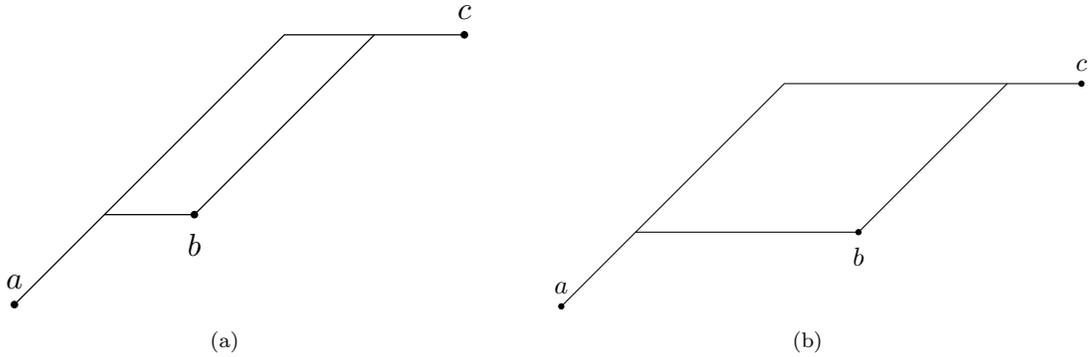
\begin{figure}[ht]
\centering
\subfigure[]{
\resizebox{0.4\textwidth}{!}{
\begin{tikzpicture}
\coordinate (a) at (0,0);
\coordinate (b) at (2,1);
\coordinate (c) at (5,3);
\coordinate (k1) at (3,3);
\coordinate (k2) at (1,1);
\coordinate (k3) at (4,3);
\foreach \coord in {a,b,c}
	\draw[fill=black] (\coord) circle[radius=1pt];
\foreach \coord in {a,c}
	\node[label={[label distance=-2pt]90:$\coord$}]at (\coord) {};
\foreach \coord in {b}
\node[label={[label distance=-20pt]:$\coord$}] at (\coord) {};
\draw (a)--(k1)--(c);
\draw (k2)--(b)--(k3);
\end{tikzpicture}}}
\hskip5mm
\subfigure[]{
\resizebox{0.45\textwidth}{!}{
\begin{tikzpicture}
\coordinate (a) at (0,0);
\coordinate (b) at (4,1);
\coordinate (c) at (7,3);
\coordinate (k1) at (3,3);
\coordinate (k2) at (1,1);
\coordinate (k3) at (6,3);
\foreach \coord in {a,b,c}
	\draw[fill=black] (\coord) circle[radius=1pt];
\foreach \coord in {a,c}
	\node[label={[label distance=-2pt]90:$\coord$}]at (\coord) {};
\foreach \coord in {b}
\node[label={[label distance=-20pt]:$\coord$}] at (\coord) {};
\draw (a)--(k1)--(c);
\draw (k2)--(b)--(k3);
\end{tikzpicture}}}
\caption{Two Cases for Triangle type T3: (a) $b_1 - a_1 < c_2 - a_2$; (b) $c_2 - a_2 < b_1 - a_1$.}
\label{fig:T3_cases} 
\end{figure}

\begin{proof}
We begin by considering the distance between the vertex $a$ and the $bc$ edge on the tropical triangle; note that the $bc$ edge is a tropical line segment of Type L1 given in Lemma \ref{lemma:tls_types}, so we compute
\begin{align*}
\dtr((a_1, a_2),\, \gamma_{bc}(t)) & = \begin{cases}
\dtr((c_1, c_2),\, (a_1 + t, a_2 + t)), & 0 \leq t \leq b_2 - a_2;\\
\dtr((c_1, c_2)),\, (a_1 + t, b_2), & b_2 - a_2 \leq t \leq b_1 - a_1.
\end{cases}\\
& = c_1 - a_1 - t,
\end{align*}
for all $0 \leq t \leq b_1 - a_1$.

Notice that for this type of tropical triangle, the Euclidean comparison triangle is degenerate: it is a single line where the vertices $a'$ and $c'$ are the endpoints, and the vertex $b'$ lies on the line between $a'$ and $c'$ at distance $b_1 - a_1$ from the vertex $a'$.

On the comparison triangle, the Euclidean distance between the vertex $a'$ and the $b'c'$ edge is $c_1 - a_1 - t$, which is exactly equal to the tropical distance between the vertex $a$ and the $bc$ edge and implies zero Alexandrov curvature.  Recall from Definition \ref{def:alexandrov} that the inequality (\ref{eq:alexandrovpos}) for fatness is weak, so here, it is also true that the tropical distance between the vertex $a$ and the $bc$ edge is greater than or equal to the Euclidean distance between the vertex $a'$ and the $b'c'$ edge.

A similar computation follows for the distance between the vertex $c$ and the $ab$ edge on the tropical triangle, $\dtr(c, \gamma_{ab}(t))$, by noting that the tropical line segment joining vertices $a$ and $b$ is again of Type L1 from Lemma \ref{lemma:tls_types}.  The tropical distance between the vertex $a$ and the $bc$ edge on the tropical triangle and the Euclidean distance between the vertex $a'$ and the $b'c'$ edge on the comparison triangle again coincide, implying zero Alexandrov curvature, or fatness of the tropical triangle when considering a weak inequality (\ref{eq:alexandrovpos}).

For the final vertex-to-edge comparison, $\dtr(b, \gamma_{ac}(t))$, notice that this triangle type has two cases, illustrated in Figure \ref{fig:T3_cases}.  We consider first consider the case illustrated in Figure \ref{fig:T3_cases}(a), defined by the inequality $b_1 - a_1 < c_2 - a_2$.  The tropical distance between the vertex $b$ and any point on the $ac$ edge is quite a bit more complicated; it is given by
\begin{equation*}
\dtr((b_1, b_2),\, \gamma_{ac}(t)) = \begin{cases}
b_1 - a_1 - t, & 0 \leq t \leq b_2 - a_2;\\
b_1 - b_2 + a_2 - a_1, & b_2 - a_2 \leq t \leq b_1 - a_1;\\
a_2 - b_2 + t, & b_1 - a_1, \leq t \leq c_2 - a_2;\\
c_2 - b_2, & c_2 - a_2 \leq t \leq b_1 + c_2 - b_2;\\
a_1 - b_1 + t, & b_1 + c_2 - b_2 \leq t \leq c_1 - a_1.
\end{cases}
\end{equation*}
The tropical to Euclidean distance comparison is then $\dtr((b_1, b_2),\, \gamma_{ac}(t))$ to
\begin{equation}
\label{eq:T3_de}
\begin{aligned}
d_e((b_1', b_2'),\, \gamma_{a'c'}(t)) & = \begin{cases} 
b_1 - a_1 - t, & 0 \leq t \leq b_1 - a_1;\\
a_1 - b_1 - t, & b_1 - a_1 \leq t \leq c_1 - a_1;
\end{cases}\\
& = |b_1 - a_1 - t|.
\end{aligned}
\end{equation}
We have equality of the tropical and Euclidean distances at the endpoints where $t = 0$ and $t = c_1 - a_1$, and
\begin{align*}
b_1 - b_2 + a_2 - a_1 \geq b_1 - a_1 - t,\\ 
a_2 - b_2 + t \geq a_1 - b_1 + t,\\
c_2 - b_2 > a_1 - b_1 + t,
\end{align*}
using the fact that we are in the case when $b_1 - a_1 < c_2 - a_2$.  Thus, we conclude that the tropical Alexandrov curvature is positive, and we have fatness (in comparison to the Euclidean case) when considering the distance between the vertex $b$ and the $ac$ edge of the tropical triangle.

We now turn to the case illustrated in Figure \ref{fig:T3_cases}(b), defined by the inequality $c_2 - a_2 < b_1 - a_1$, and compute the tropical distance between the vertex $b$ and any point on the $ac$ edge:
\begin{equation*}
\dtr((b_1, b_2),\, \gamma_{ac}(t)) = \begin{cases}
b_1 - a_1 - t, & 0 \leq t \leq b_2 - a_2;\\
b_1 - b_2 + a_2 - a_1, & b_2 - a_2 \leq t \leq c_2 - a_2;\\
c_2 - b_2 + b_1 - a_1 - t, & c_2 - a_2 \leq t \leq b_1 - a_1;\\
c_2 - b_2, & b_1 - a_1 \leq t \leq b_1 + c_2 - b_2 - a_1;\\
a_1 - b_1 + t, & b_1 + c_2 - b_2 - a_1 \leq t \leq c_1 - a_1.
\end{cases}
\end{equation*}
As above, we compare this to the comparison triangle where we take the Euclidean distance between the vertex $b'$ and the $a'c'$ edge given by (\ref{eq:T3_de}).  Again, we have equality at the endpoints, and
\begin{align*}
b_1 - b_2 + a_2 - a_1 \geq b_1 - a_1 - t,\\
c_2 - b_2 + b_1 - a_1 - t \geq b_1 - a_1 - t,\\
c_2 - b_2 > a_1 - b_1 + t,
\end{align*}
using the fact that we are in the case when $c_2 - a_2 < b_1 - a_1$.  Thus, we again have fatness when considering the distance between the vertex $b$ and the $ab$ edge on the tropical triangle in this second case, which completes the proof that the Alexandrov curvature of tropical triangles of type T3 is always positive (nonnegative) and so these triangles are always fat.
\end{proof}

Visually, fatness corresponds to the curves of the tropical distances of the tropical triangles always lying above, or coinciding with, the curves of the Euclidean distances of the comparison triangles.  Here, on two of the three vertex to edge comparison, the two curves coincide, while the final $b$ to $ac$ edge coincides with the Euclidean distances at the beginning and end segments of the range of $t$, and in the middle, the curves of the tropical distance lie above the Euclidean distances.  This is illustrated in Figure \ref{fig:type3}, with an example triangle given by vertices $a = (0,0),\, b = (3,3),\, c = (8,3)$.

\begin{figure}[h!]
\centering
\includegraphics[scale=0.3]{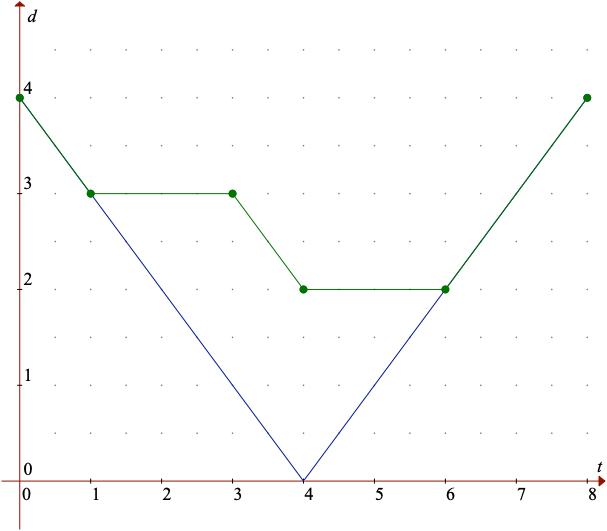}
\caption{Typical tropical distance function vs. Euclidean distance function for Type T3.}
\label{fig:type3}
\end{figure}

\begin{remark}\label{rmk:flat}
Flat tropical triangles do exist in $\R^n/\R\one$, albeit in degenerate form when the three vertices lie on the same tropical line. It is worth mentioning that, while sampling uniformly from the simplex as in the last section would give probability zero to the occurrence of three sampled vertices being tropically collinear, this can quickly change if the distribution is different. Say that for an application in question, we only consider vertices with integer coordinates and $a,b,c \in \R^2/\R\one$ are drawn uniformly from the integers $\{0,\dots,10\}$. In one experiment, we drew 100 triangles in this way and computed their curvature, obtaining the results of Table~\ref{tab:intcoord}. We note that now 20\% of the triangles were flat. One such random triangle with $a=(5,1)$, $b=(7,3)$ and $c=(10,4)$ is shown in Figure~\ref{fig:flat}.
\begin{table}
\begin{center}
\caption{Proportions of sampled triangles with integer coordinates according to their curvature} \label{tab:intcoord}      
\begin{tabular}{llll|l}
\hline\noalign{\smallskip}
zero & positive & negative & undefined & total  \\
\noalign{\smallskip}\hline\noalign{\smallskip}
20\% & 22\% & 7\% & 51\% & 100\% \\
\noalign{\smallskip}\hline
\end{tabular}
\end{center}
\end{table}
\end{remark}

\begin{figure}[h!]
\centering
\includegraphics[scale=0.3]{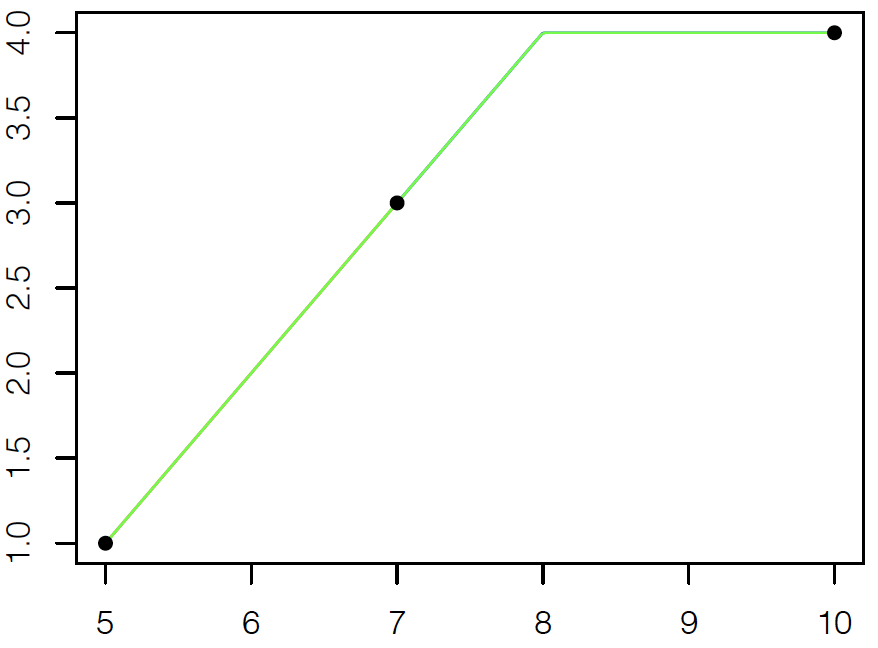}
\caption{Tropical triangle obtained by random integer sampling with zero curvature (flat).}
\label{fig:flat}
\end{figure}



\section{Undefined Alexandrov Curvature in $\R^3/\R\one$}
\label{sec:undefined}

While in the previous Section \ref{sec:flat_is_fat} we establish that Alexandrov curvature can be well-defined and positive (nonnegative) everywhere on the same combinatorial type, we now show that Alexandrov curvature can also be {\em never} well-defined on a combinatorial type. This happens if we use triangle types T2 and T4 to study Alexandrov curvature, illustrated in Figure \ref{fig:5types}(b) and (c), respectively.

As we will see in this section and later on, undefined Alexandrov curvature is a common occurrence in the tropical projective torus.  In particular, it also occurs in the space of phylogenetic trees.  This is an important fact to take into account when developing statistical methodology that depends on geometric features of the underlying space.

\subsection{Undefined Curvature Everywhere: Triangle Types T2 and T4}

We now prove that the combinatorial triangle types T2 and T4 always have undefined Alexandrov curvature. We consider the vertices to be $a=(a_1,a_2)$, $b=(b_1,b_2)$ and $c=(c_1,c_2)$  in $\R^3/\R\one$.

\begin{theorem}\label{thm:T2nocurvature}
Consider a general triangle of type T2 given by the inequalities
$$a_1 < b_1 < c_1, \qquad a_2 < c_2 < b_2, \qquad a_1-a_2 < b_1 - b_2.$$
Then any such triangle has no well-defined Alexandrov curvature.
\end{theorem}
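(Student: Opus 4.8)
The plan is to follow the template of the proof of Theorem~\ref{thm:flat_is_fat}: reduce to a single vertex-to-edge comparison, compute the tropical distance function there as a piecewise-linear function of the parameter $t$, build the Euclidean comparison distance from Lemma~\ref{lem:euclidist}, and compare the two. The key structural remark is that well-definedness of Alexandrov curvature for one triangle requires one of \eqref{eq:alexandrovneg}, \eqref{eq:alexandrovpos} to hold for \emph{every} point on \emph{every} edge; hence it suffices to exhibit, within a \emph{single} vertex-to-edge comparison, one point where the tropical distance is strictly smaller than its Euclidean counterpart (a skinny point) and one point where it is strictly larger (a fat point). Their coexistence simultaneously violates both global inequalities and forces the curvature to be undefined.

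First I would record the segment types forced by the T2 inequalities via Lemma~\ref{lemma:tls_types}: the edge $ab$ is of type L1, and a short $\dtr$ computation gives the edge lengths. Write $P=\dtr(a,b)=b_1-a_1$, $Q=\dtr(a,c)$, $L=\dtr(b,c)$, and set $u=b_2-c_2>0$, $v=c_1-b_1>0$, so that $Q=P+v$ and $L=u+v$. I would then study the vertex $c$ against the edge $ab$, parametrized from $a$ ($t=0$) to $b$ ($t=P$), and evaluate $\dtr(c,\gamma_{ab}(t))$ only on the two pieces adjacent to the endpoints. A routine $\min/\max$ evaluation gives $\dtr(c,\gamma_{ab}(t))=Q-t$ on the initial piece and $\dtr(c,\gamma_{ab}(t))=Q+u-t$ on the final piece. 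The defining inequalities make both pieces genuine intervals, with no sub-case splitting: the initial piece is nonempty because $a_2<c_2$, and the final piece because $a_1-a_2<b_1-b_2$ forces $P=b_1-a_1>b_2-a_2=u+(c_2-a_2)$.

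The decisive observation is that on each end piece the tropical distance is linear of unit slope, so in the squared difference $\Delta(t):=\dtr^2(c,\gamma_{ab}(t))-d_e^2(c',\gamma_{a'b'}(t))$ the leading $t^2$ of the Euclidean parabola $d_e^2=t^2+\tfrac{L^2-P^2-Q^2}{P}\,t+Q^2$ (from Lemma~\ref{lem:euclidist}) cancels, leaving $\Delta$ \emph{exactly linear} on each end piece with $\Delta(0)=\Delta(P)=0$. Evaluating the two slopes gives $\Delta'(0)=\tfrac{v^2-L^2}{P}<0$ since $v<L$, so the triangle is strictly skinny just after $t=0$, and $\Delta'(P)=\tfrac{Q^2-(P+L)^2}{P}<0$ since $Q=P+v<P+u+v=P+L$, so, as $\Delta(P)=0$, the triangle is strictly fat just before $t=P$. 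This produces the required skinny and fat points on a single edge and finishes the argument. I expect the main obstacle to be purely the bookkeeping of the $\min/\max$ evaluations for the two end pieces and the verification, from the T2 inequalities alone, that the relevant intervals are nondegenerate; the payoff of restricting attention to the unit-slope end pieces is precisely that it eliminates the case analysis (breakpoint locations, position of the perpendicular foot) that a middle-of-the-edge comparison would demand.
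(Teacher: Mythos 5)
Your proposal is correct and takes essentially the same route as the paper's own proof: both compare the vertex $c$ to the edge $ab$, evaluate the tropical distance on the two pieces adjacent to the endpoints, invoke Lemma~\ref{lem:euclidist} for the Euclidean parabola, and exhibit the sign change of $\dtr^2(c,\gamma_{ab}(t))-d_e^2(c',\gamma_{a'b'}(t))$ between the start and the end of the segment. Your slopes $\tfrac{v^2-L^2}{P}$ and $\tfrac{Q^2-(P+L)^2}{P}$ are exactly the paper's factored expressions $\tfrac{(b_2-c_2)(2(c_1-b_1)+b_2-c_2)}{b_1-a_1}$ and $-\tfrac{(b_2-c_2)(2(c_1-a_1)+b_2-c_2)}{b_1-a_1}$ in the $P,Q,L,u,v$ notation, so the two arguments coincide up to bookkeeping.
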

\begin{proof}
First we see that the lengths of the sides of such a triangle are given by Lemma \ref{lemma:tls_types}
\begin{align*}
    \dtr((a_1,a_2),\, (b_1, b_2)) & = b_1 - a_1 \\
    \dtr((a_1,a_2),\, (c_1, c_2)) & = c_1 - a_1 \\
    \dtr((b_1,b_2),\, (c_1, c_2)) & = c_1 - b_1 + b_2 - c_2
\end{align*}
Now we compute the distance of vertex $c$ to the opposite side $ab$ to obtain
\begin{equation*}
\dtr((c_1,c_2),\, \gamma_{ab}(t)) = \begin{cases}
c_1-a_1-t, & 0 \leq t \leq c_2-a_2,\\
c_1-a_1+a_2-c_2, & c_2-a_2 \leq t \leq b_2-a_2,\\
c_1-a_1-t+b_2-c_2, & b_2-a_2 \leq t \leq b_1-a_1.
\end{cases}
\end{equation*}
On the other hand, by Lemma \ref{lem:euclidist}, the Euclidean distance is given by
$$ d_e^2(c',\gamma_{ab}(t)) = t^2 + \frac{(c_1-b_1+b_2-c_2)^2-(c_1-a_1)^2-(b_1-a_1)^2}{b_1-a_1}t + (c_1-a_1)^2$$
Consider the beginning of the segment, when $0 \leq t \leq c_2-a_2$. Then
\begin{align*}
    d_e^2(c',\gamma_{ab}(t)) -& \dtr^2(c,\, \gamma_{ab}(t))  \, = \, d_e^2(c',\gamma_{ab}(t)) - ((c_1-a_1)-t)^2 \\
    &= \frac{(c_1-b_1+b_2-c_2)^2-(c_1-a_1)^2-(b_1-a_1)^2}{b_1-a_1}t +2(c_1-a_1)t \\
    &= \frac{(c_1-b_1+b_2-c_2)^2-((c_1-a_1) - ( b_1-a_1))^2}{b_1-a_1}t \\
    &= \frac{(b_2-c_2)(2(c_1-b_1) + b_2 - c_2)}{b_1-a_1}t \, \, > \, 0
\end{align*}
where we have used that $b_2-c_2>0$ , \, $c_1-b_1>0$, \,  $b_1-a_1>0$ and $t>0$.
If we now consider the end of the segment, when $b_2 - a_2 \leq t \leq b_1-a_1$, then (using part of the above calculation)
\begin{align*}
   d_e^2(c'&,\gamma_{ab}(t)) - \dtr^2(c,\, \gamma_{ab}(t))  \, = \, d_e^2(c',\gamma_{ab}(t)) - ((c_1-a_1-t)+b_2-c_2)^2 \\ 
   & = \frac{(b_2-c_2)(2(c_1-b_1) + b_2 - c_2)}{b_1-a_1}t - 2(c_1-a_1-t)(b_2-c_2)-(b_2-c_2)^2 \\
   & = (b_2-c_2) \frac{(2c_1-2a_1 + b_2 - c_2)t - (2(c_1-a_1)+(b_2-c_2))(b_1-a_1)}{b_1-a_1} \\
   & = - \frac{(b_2-c_2)(2(c_1-a_1) + b_2 - c_2)(b_1-a_1-t)}{b_1-a_1} \, < \,0
\end{align*}
where we have used that $b_2-c_2>0$, \, $c_1-a_1>0$, \, $b_1-a_1>0$ and $b_1-a_1-t > 0$.
Therefore, the comparison between the lengths of the Euclidean and tropical segments is not consistent throughout the whole side, and hence Alexandrov curvature for these triangles is undefined.
\end{proof}

Visually, the undefined Alexandrov curvature corresponds to a change in the relative positions of the curves of the tropical and Euclidean distances; an example with vertex set $a = (0,0),\, b = (3,2),\, c = (4,1)$ and the distances for the $b$ $(b')$ to $ac$ $(a'c')$ edge is shown in Figure \ref{fig:type2}.

\begin{figure}[h!]
\subfigure[]{
\centering
\includegraphics[scale=0.28]{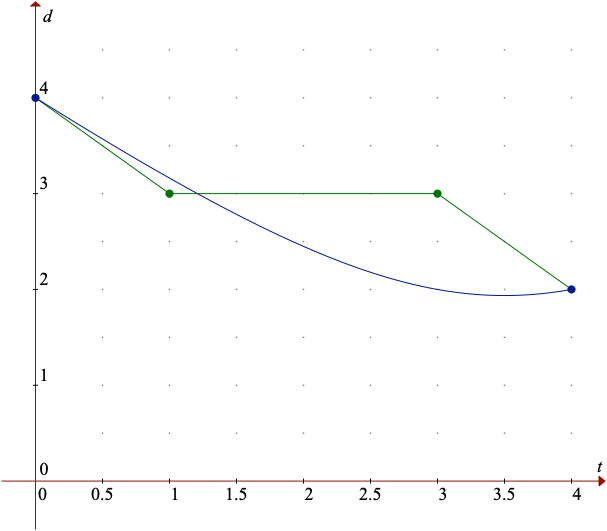}}
\hskip35mm
\subfigure[]{
\begin{tikzpicture}
\coordinate (o) at (0,0);
\coordinate (a) at (0,0);
\coordinate (b) at (4,2.6);
\coordinate (c) at (5.3,1.3);
\coordinate (k1) at (2.6,2.6);
\coordinate (k2) at (4,1.3);
\coordinate (k3) at (1.3,1.3);
\foreach \coord in {a,b,c}
	\draw[fill=black] (\coord) circle[radius=1pt];
\foreach \coord in {a,b,c}
	\node[label={[label distance=-2pt]90:$\coord$}]at (\coord) {};
\draw (a)--(k1);
\draw (b)--(k2);
\draw (c)--(k3);
\draw (k1)--(b);
\end{tikzpicture}}
\caption{(a) Typical tropical distance function vs. Euclidean distance function for type T2; (b) Tropical triangle T2.}
\label{fig:type2}
\end{figure}

We see a similar behavior for T4 triangles, defined by the following set of inequalities:
$$
a_1 < b_1 < c_1, \qquad b_2 < a_2 < c_2, \qquad a_1 - a_2 < c_1 - c_2, \qquad b_1 - b_2 < c_1 - c_2.
$$

To compute the tropical distance function between vertex $b$ and the $ac$ edge we distinguish between two cases,
$$
c_2 - a_2 \leq b_1 - a_1 \qquad \mbox{~and~} \qquad c_2 - a_2 \geq b_1 - a_1,
$$
illustrated in Figure \ref{fig:T4_cases}.

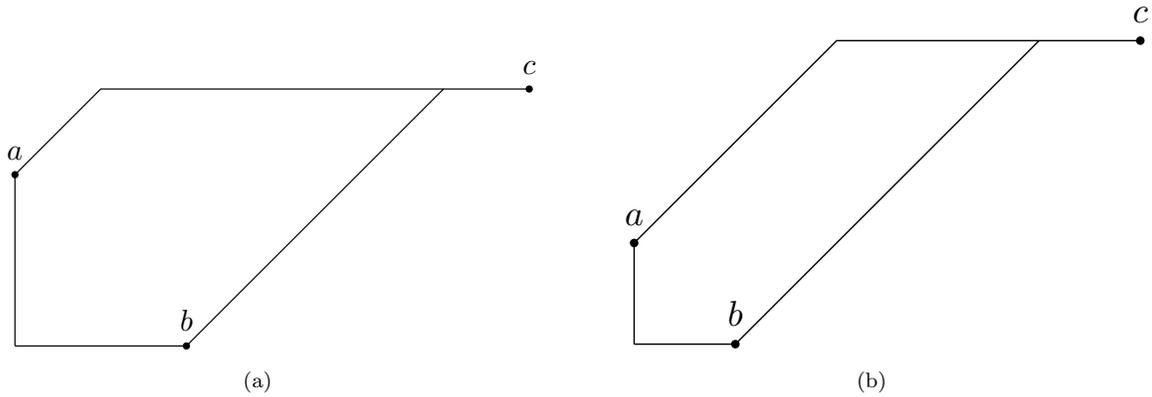
\begin{figure}[ht]
\centering
\subfigure[]{
\resizebox{0.45\textwidth}{!}{
\begin{tikzpicture}
\coordinate (a) at (0,2);
\coordinate (b) at (2,0);
\coordinate (c) at (6,3);
\coordinate (k1) at (0,0);
\coordinate (k2) at (1,3);
\coordinate (k3) at (5,3);
\foreach \coord in {a,b,c}
	\draw[fill=black] (\coord) circle[radius=1pt];
\foreach \coord in {a,b,c}
	\node[label={[label distance=-2pt]90:$\coord$}]at (\coord) {};
\draw (k1)--(a)--(k2)--(c);
\draw (k1)--(b)--(k3);
\end{tikzpicture}}}
\hskip5mm
\subfigure[]{
\resizebox{0.45\textwidth}{!}{
\begin{tikzpicture}
\coordinate (a) at (0,1);
\coordinate (b) at (1,0);
\coordinate (c) at (5,3);
\coordinate (k1) at (0,0);
\coordinate (k2) at (2,3);
\coordinate (k3) at (4,3);
\foreach \coord in {a,b,c}
	\draw[fill=black] (\coord) circle[radius=1pt];
\foreach \coord in {a,b,c}
	\node[label={[label distance=-2pt]90:$\coord$}]at (\coord) {};
\draw (k1)--(a)--(k2)--(c);
\draw (k1)--(b)--(k3);
\end{tikzpicture}}}
\caption{Two cases for triangle type T4: (a) $c_2 - a_2 \leq b_1 - a_1$; (b) $c_2 - a_2 \geq b_1 - a_1$.}
\label{fig:T4_cases} 
\end{figure}

For example, the general expression for the tropical distance between the vertex $b$ and the $ac$ edge when $c_2 - a_2 \leq b_1 - a_1$ is
\begin{equation*}
\label{eq:tr_dist_bac_T4}
\dtr((b_1, b_2),\, \gamma_{ac}(t)) = \begin{cases}
a_2 - b_2 + b_1 - a_1, & 0 \leq t \leq c_2 - a_2;\\
c_2 - b_2 + b_1 - a_1 - t, & c_2 - a_2 \leq t \leq b_1 - a_1;\\
c_2 - b_2, & b_1 - a_1 \leq t \leq c_2 - b_2 + b_1 - a_1;\\
a_1 + t - b_1, & c_2 - b_2 + b_1 - a_1 \leq t \leq c_1 - a_1.
\end{cases}
\end{equation*}
The corresponding expression for the case $c_2 - a_2 \geq b_1 - a_1$ is the same at the initial and final parts of the segment, and just varies slightly in the middle.

\begin{theorem}\label{thm:T4nocurvature}
Consider a general triangle of type T4 given by the inequalities
$$
a_1 < b_1 < c_1, \qquad b_2 < a_2 < c_2, \qquad a_1 - a_2 < c_1 - c_2, \qquad b_1 - b_2 < c_1 - c_2.
$$
Then any such triangle has no well-defined Alexandrov curvature.
\end{theorem}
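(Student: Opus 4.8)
The plan is to follow the same template as the proof of Theorem~\ref{thm:T2nocurvature}: record the three side lengths from Lemma~\ref{lemma:tls_types}, use the explicit tropical distance from the vertex $b$ to the opposite side $ac$, compare it against the Euclidean comparison distance supplied by Lemma~\ref{lem:euclidist}, and exhibit a change in the sign of $d_e^2(b',\gamma_{a'c'}(t)) - \dtr^2(b,\gamma_{ac}(t))$ across the side. First I would record the side lengths: under the T4 inequalities the $ac$ and $bc$ edges are of type L1 and the $ab$ edge is of type L3, so
$$
A := \dtr(b,c) = c_1 - b_1, \quad B := \dtr(a,c) = c_1 - a_1, \quad C := \dtr(a,b) = (a_2 - b_2) + (b_1 - a_1),
$$
and Lemma~\ref{lem:euclidist} then gives $d_e^2(b',\gamma_{a'c'}(t)) = t^2 + \frac{A^2 - B^2 - C^2}{B}\,t + C^2$.

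The observation that keeps both cases of Figure~\ref{fig:T4_cases} under a single argument is that the tropical distance function $\dtr(b,\gamma_{ac}(t))$ has the same initial piece (the constant $a_2 - b_2 + b_1 - a_1 = C$ on $0 \le t \le c_2 - a_2$) and the same final piece ($a_1 + t - b_1$ on $c_2 - b_2 + b_1 - a_1 \le t \le c_1 - a_1$) regardless of which case holds; the two expressions differ only on the middle piece. Since Alexandrov curvature is determined by the comparison over the \emph{entire} side, it suffices to evaluate the sign of the difference on these two common pieces. On the initial piece the tropical distance is constant equal to $C$, so the difference equals $t^2 + \frac{A^2 - B^2 - C^2}{B}\,t$; a short expansion with $p := b_1 - a_1 > 0$, $q := c_1 - b_1 > 0$, $r := a_2 - b_2 > 0$ gives $A^2 - B^2 - C^2 = -(2p^2 + 2pq + 2rp + r^2) < 0$, so the difference is strictly negative for small $t > 0$, meaning the tropical triangle is fat there.

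On the final piece the tropical distance is $a_1 + t - b_1 = t - p$, and substituting $\dtr^2 = (t - p)^2$ into the difference yields, after simplification, $r(2p + r)\,\frac{(c_1 - a_1) - t}{p + q}$, which is strictly positive for $t < c_1 - a_1$; thus the triangle is skinny there. Because the sign of $d_e^2 - \dtr^2$ is negative on the initial piece and positive on the final piece, the Alexandrov comparison inequality reverses along the side, so no consistent curvature sign exists and the curvature is undefined.

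I expect the main obstacle to be purely bookkeeping: correctly identifying $A,B,C$ from the L1/L3 classification and verifying that the final piece is genuinely nonempty, which is exactly where the defining inequality $b_1 - b_2 < c_1 - c_2$ (equivalently $c_2 - b_2 + b_1 - a_1 < c_1 - a_1$) is used. Once the two endpoints of the side are handled, the middle piece---and hence the case distinction of Figure~\ref{fig:T4_cases}---never needs to be examined.
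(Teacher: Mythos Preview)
Your proof is correct and follows essentially the same approach as the paper's: compute the side lengths, focus on the vertex $b$ and the opposite side $ac$, and detect the sign change of $d_e^2-\dtr^2$ between the initial (constant) piece and the final (linear) piece, thereby bypassing the middle case split of Figure~\ref{fig:T4_cases}. One small slip: the initial constant piece is on $[0,\min\{c_2-a_2,\,b_1-a_1\}]$ rather than $[0,c_2-a_2]$ in both cases, but since your argument only uses the sign for small $t>0$ this does not affect correctness.
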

\begin{proof}
First we use Lemma \ref{lemma:tls_types} to compute the lengths of the sides of such a triangle. We have that $ab$ is of type L3, while $ac$ and $bc$ are of type L1, so
\begin{align*}
    \dtr((a_1,a_2),\, (b_1, b_2)) & = a_2 - b_2 + b_1 - a_1 \\
    \dtr((a_1,a_2),\, (c_1, c_2)) & = c_1 - a_1 \\
    \dtr((b_1,b_2),\, (c_1, c_2)) & = c_1 - b_1.
\end{align*}
We consider the distance of vertex $c$ to the opposite side $ab$. As mentioned in the paragraph preceding the theorem, for small enough $t>0$ this is given by
$\dtr(b,\, \gamma_{ac}(t)) = a_2 - b_2 + b_1 - a_1$, while for large enough $t<c_1-a_1$ it is given by $a_1-b_1+t$.

On the other hand, by Lemma \ref{lem:euclidist}, the square of the Euclidean distance $d_e^2(b',\gamma'_{ab}(t))$ is given by
$$t^2 + \frac{(c_1-b_1)^2-(a_2-b_2+b_1-a_1)^2-(c_1-a_1)^2}{c_1-a_1}t + (a_2-b_2+b_1-a_1)^2.$$

At the beginning of the segment, we have
\begin{align*}
    \dtr^2(c',\gamma_{ab}(t)) - d_e^2(c&,\,  \gamma_{ab}(t))   \\
    &= t \left( -\frac{(c_1-b_1)^2-(a_2-b_2+b_1-a_1)^2-(c_1-a_1)^2}{c_1-a_1}-t \right) 
\end{align*}
Since $t>0$ is small enough, we can use $t\leq b_1-a_1$ to bound the second factor from below by
\begin{align*}
  & -\frac{(c_1-b_1)^2-(a_2-b_2+b_1-a_1)^2-(c_1-a_1)^2}{c_1-a_1}-(b_1-a_1)  \\
  &= \frac{(a_2-b_2+b_1-a_1)^2-(c_1-b_1)^2}{c_1-a_1}+(c_1-a_1)-(b_1-a_1) \\
   &= \frac{(a_2-b_2+b_1-a_1)^2-(c_1-b_1)^2+(c_1-b_1)(c_1-a_1)}{c_1-a_1} \\
     &= \frac{(a_2-b_2+b_1-a_1)^2+(c_1-b_1)(b_1-a_1)}{c_1-a_1} \, > \, 0
\end{align*}
where we have used that $c_1-a_1>0$ , \, $c_1-b_1>0$ \, and \, $b_1-a_1>0$. Since $t>0$ as well, we conclude that 
 $$\dtr^2(c',\gamma_{ab}(t)) - d_e^2(c,\,  \gamma_{ab}(t)) > 0 \qquad \text{ for } t\leq \min \{ c_2-a_2, b_1-a_1 \}.$$
If we now consider the end of the segment, by a similar calculation as in the last part of the proof of Theorem~\ref{thm:T2nocurvature},
\begin{align*}
   d_e^2(c',\gamma_{ab}(t)) - \dtr^2(c,\, \gamma_{ab}&(t))  \, = \, d_e^2(c',\gamma_{ab}(t)) - (t-(b_1-a_1))^2 \\ 
   & = - \frac{(a_2-b_2)(2(b_1-a_1) + a_2 - b_2)(c_1-a_1-t)}{c_1-a_1} \, < 0.
\end{align*}
where we have used that $a_2-b_2>0$, \, $b_1-a_1>0$, \, $c_1-a_1>0$ and $c_1-a_1-t > 0$.
This sign change implies that Alexandrov curvature for these triangles is undefined.
\end{proof}

For example, consider the vertices $a = (3,4),\, b = (6,3),\, c = (9,5)$ in $\R^3/\R\one$ with distance curves for the $b$ $(b')$ to $ac$  $(a'c')$ edge illustrated in Figure \ref{fig:type4}.

\begin{figure}[h!]
\centering
\includegraphics[scale=0.3]{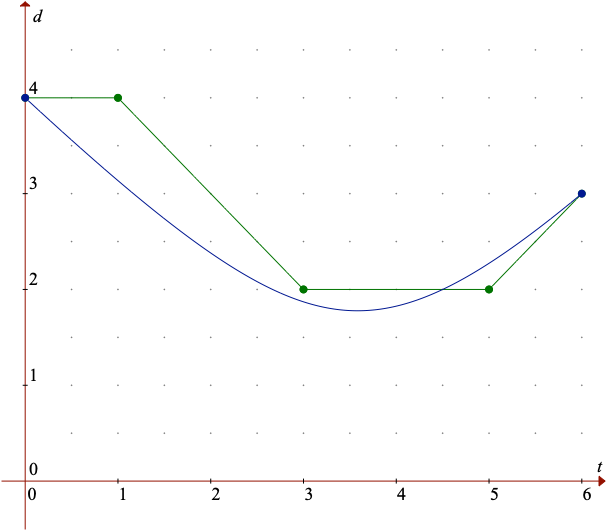}
\caption{Typical tropical distance function vs. Euclidean distance function for type T4}
\label{fig:type4}
\end{figure}

Undefined Alexandrov curvature is an unexpected and surprising phenomenon, especially in the ``nice'' setting of the tropical projective torus, which is a metric measure space that satisfies all the properties necessary for probabilistic questions and statistical methodology to be well-defined.  In general, undefined Alexandrov curvature is a peculiar phenomenon: it is known to occur in branching and bifurcating spaces, and particularly when geodesics bifurcate \citep{plaut2001metric}.  For this reason, Alexandrov curvature is not an appropriate measure of curvature to use on skeletal spaces such as graphs or trees.  This, however, is not the setting of $(\R^n/\R\one, \dtr)$ and, to the best of our knowledge, is the first occurrence of undefined Alexandrov curvature in a well-defined, continuous, metric measure space that is moreover a vector space (see Remark \ref{rem:vectorspace}).  This invites a further in-depth exploration of this phenomenon; potentially, the underlying relation between the tropical projective torus and phylogenetic tree space, as well as the polyhedral nature of tropical geometry, may be contributing to the undefinedness.  It also raises the question that perhaps undefined Alexandrov curvature only occurs in the subset of the tropical projective torus that corresponds to tree space.  As we will see in the next section, this is not the case, so the answer is not so easy.

\subsection{How often does each triangle type occur?}

We have so far established by proof that in the plane, two out of the five combinatorial types of tropical triangle always have undefined curvature and one always has positive curvature.  We now show by numerical experimentation how often Alexandrov curvature is well-defined and give the breakdown by triangle type.

\begin{table}
\begin{center}
\caption{Average proportions of sampled triangles types} \label{tab:types}      
\begin{tabular}{lllll|l}
\hline\noalign{\smallskip}
Type 1 & Type 2 & Type 3 & Type 4 & Type 5 & Total  \\
\noalign{\smallskip}\hline\noalign{\smallskip}
11.1\% & 35.9\% & 21.2\% & 25.8\% & 6\% & 100\% \\
\noalign{\smallskip}\hline
\end{tabular}
\end{center}
\end{table}

Based on these numerical results, an interesting question is what is the ``size'' of the set of each triangle type and each curvature type? This question is of independent interest apart from the Alexandrov curvature implication. We coded a function that takes an arbitrary triangle in the plane and returns its type (of which there could be multiple, if the triangle is not generic). As with our other experiments, we sampled uniformly from the probability simplex. Table~\ref{tab:types} shows the average proportions over 10 runs for each of the five types with a sample of 1000 triangles each time. 
Notice that, indeed, these results are consistent with Tables~\ref{tab:har} and the first column of \ref{tab:experiments_n}.


Another way that this problem may be phrased is in terms of tropical volume: using existing results by Loho \& Schymura, the volume of these sets could possibly be estimated \citep{Loho2020}.


\section{Alexandrov Curvature in Higher Dimensions}
\label{sec:higherdim}

While we have focused on the tropical plane, the question of Alexandrov curvature also makes sense in higher dimensions for a general $n$, i.e., in $\R^n/\R\one$. 

We prove a generalization of Theorem~\ref{thm:flat_is_fat} for a special type of triangles in $n$ dimensions. We follow the notational convention from Remark~\ref{rem:notation}.

\begin{theorem}\label{thm:flat_is_fat_n}
Any $n$-dimensional tropical triangle given by $a,b,c \in \R^{n+1}/\R\one$ such that
\begin{equation*}
    a_1 < b_1 < c_1 \,, \, a_2 <b_2 <c_2 \,, \,\dots,\, a_{n-1} < b_{n-1} < c_{n-1} \,,\, a_n > b_n > c_n \label{eq:super3_1}
\end{equation*}
and 
\begin{align}
    b_1 - a_1 > b_2 -a_2 > &\dots > b_{n-1}-a_{n-1} \nonumber\\
    c_1 - b_1 > c_2 -b_2 > &\dots > c_{n-1}-b_{n-1} \label{eq:super3_2}
\end{align}
is positively curved (fat).
\end{theorem}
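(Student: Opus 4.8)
The plan is to reduce the entire statement to a single metric fact: the hypotheses force the Euclidean comparison triangle to \emph{degenerate} onto a segment, exactly as happened in the planar case of Theorem~\ref{thm:flat_is_fat}, where the authors already observed that ``the Euclidean comparison triangle is degenerate: it is a single line.'' Once degeneracy is in hand, fatness follows from elementary triangle and reverse-triangle inequalities rather than from any explicit piecewise distance computation. So the first and only genuinely computational step is to read off the three edge lengths $\dtr(a,b)$, $\dtr(b,c)$, $\dtr(a,c)$ directly from Definition~\ref{def:tropicalmetric}.

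To compute these I would use \eqref{eq:super3_1} and \eqref{eq:super3_2} to locate the coordinates attaining the maximum and minimum of each difference vector. Writing $a=[(0,a_1,\dots,a_n)]$, and likewise for $b,c$, the sign pattern in \eqref{eq:super3_1} (that $a_i<b_i$ for $i<n$ while $a_n>b_n$, and similarly for the other pairs) places the maximum of $a-b$ in the last coordinate, while the monotone chains in \eqref{eq:super3_2} place its minimum in the first coordinate. This gives
\[
\dtr(a,b)=(a_n-b_n)-(a_1-b_1),\qquad \dtr(b,c)=(b_n-c_n)-(b_1-c_1),
\]
and, after adding the two chains of \eqref{eq:super3_2} termwise to see that $a_1-c_1$ is again the smallest entry of $a-c$, also $\dtr(a,c)=(a_n-c_n)-(a_1-c_1)$. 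Summing the first two yields $\dtr(a,b)+\dtr(b,c)=\dtr(a,c)$, so the comparison triangle collapses onto a line with $b'$ lying between $a'$ and $c'$, and $b$ is the middle vertex.

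With degeneracy established I would finish abstractly, handling each vertex-to-edge comparison. For the vertex $b$ opposite the long edge $ac$: the comparison point $x'$ for $x\in\gamma_{ac}$ sits at distance $\dtr(a,x)$ from $a'$ along the segment, while $b'$ sits at distance $\dtr(a,b)$, so $d_e(b',x')=|\dtr(a,x)-\dtr(a,b)|$; the reverse triangle inequality $\dtr(b,x)\ge|\dtr(a,x)-\dtr(a,b)|$ then gives fatness with no further work. For the vertex $a$ opposite $bc$ (and symmetrically $c$ opposite $ab$): the order $a',b',x'$ on the degenerate line gives $d_e(a',x')=\dtr(a,b)+\dtr(b,x)$. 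The triangle inequality yields $\dtr(a,x)\le\dtr(a,b)+\dtr(b,x)$, while the reverse triangle inequality combined with degeneracy and the fact that $x$ lies on the geodesic $\gamma_{bc}$ (so $\dtr(b,x)+\dtr(x,c)=\dtr(b,c)$) yields the matching lower bound
\[
\dtr(a,x)\ge \dtr(a,c)-\dtr(x,c)=\dtr(a,b)+\dtr(b,x).
\]
Hence equality holds, which is flat and therefore satisfies the weak fatness inequality (\ref{eq:alexandrovpos}). Combining the three comparisons shows the triangle is fat, and scale-invariance (Lemma~\ref{lem:scale}) disposes of the ``small enough'' clause in Definition~\ref{def:alexandrov}.

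The main obstacle is conceptual rather than computational: the naive route, imitating the planar proof of Theorem~\ref{thm:flat_is_fat} by writing $\dtr(b,\gamma_{ac}(t))$ out explicitly, becomes unwieldy in dimension $n$, since $\gamma_{ac}$ is now a concatenation of up to $n$ Euclidean pieces (Lemma~\ref{lemma:tls_types} and Proposition~3 of \cite{develin2004tropical}), producing a distance function with many cases. The observation that sidesteps this is that the degeneracy relation $\dtr(a,c)=\dtr(a,b)+\dtr(b,c)$ already does all the work; the only remaining care is to verify, via \eqref{eq:super3_1}--\eqref{eq:super3_2}, that the maxima and minima defining each $\dtr$ are attained at the last and first coordinates so that this relation indeed holds.
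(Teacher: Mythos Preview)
Your proof is correct and cleaner than the paper's. Both arguments begin identically: compute the three edge lengths from the hypotheses, observe that $\dtr(a,b)+\dtr(b,c)=\dtr(a,c)$, and conclude that the Euclidean comparison triangle degenerates to a segment with $b'$ between $a'$ and $c'$. From there the approaches diverge.

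The paper proceeds concretely: it writes out the piecewise parametrization of each tropical segment (a concatenation of up to $n$ Euclidean pieces), computes $\dtr(a,\gamma_{bc}(t))$ and $\dtr(c,\gamma_{ab}(t))$ explicitly to see they equal the Euclidean comparison distances, and for the remaining case $\dtr(b,\gamma_{ac}(t))$ argues informally that the tropical distance, being $1$-Lipschitz in the arc-length parameter and agreeing with $|\,\dtr(a,b)-t\,|$ at both endpoints while going constant near them, must stay above. You bypass all of this with a purely metric argument: for $a$ (and symmetrically $c$) opposite a short side, the triangle and reverse triangle inequalities combine with degeneracy and the geodesic identity $\dtr(b,x)+\dtr(x,c)=\dtr(b,c)$ to force $\dtr(a,x)=d_e(a',x')$; for $b$ opposite the long side, the reverse triangle inequality alone gives $\dtr(b,x)\ge|\dtr(a,x)-\dtr(a,b)|=d_e(b',x')$.

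Your route is not only shorter but strictly more general: it shows that in \emph{any} geodesic metric space, a triangle whose side lengths satisfy $d(a,b)+d(b,c)=d(a,c)$ is fat in the sense of Alexandrov. The paper's explicit parametrizations, by contrast, yield a bit more information (e.g., exactly where the middle comparison is strict), but at the cost of case analysis that grows with $n$ and a somewhat heuristic final step.
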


\begin{proof}
We first note that adding the inequalities in \eqref{eq:super3_2} we obtain
\begin{equation*}\label{eq:super3_3}
    c_1 - a_1 > c_2 -a_2 > \dots > c_{n-1}-c_{n-1}
\end{equation*}
so that we have
\begin{align*}
    \dtr(a,\, b) & = b_1 - a_1 + a_n - b_n, \\
    \dtr(a,\, c) & = c_1 - b_1 + b_n - c_n, \\
    \dtr(b\, c) & = c_1 - a_1 + a_n - c_n.
\end{align*}
Since $\dtr(a,\, b) + \dtr(a,\, c) = \dtr(b\, c)$, the comparison Euclidean triangle consists of three collinear points $a', b', c'$. 

Let us consider the tropical segment between $b$ and $c$, which is now formed by $n$ line segments. This can be parametrized by
\begin{small}
$$\gamma_{bc}(t) = 
\begin{cases}
(b_1,\dots,b_{n-1},b_n-t) & 0 \leq t \leq b_n-c_n;\\
(b_1+c_n-b_n+t,\dots,b_{n-1}+c_n-b_n+t,c_n)  & b_n-c_n \leq t \leq b_n-c_n+c_{n-1}-b_{n-1};\\
\vdots & \vdots \\
(b_1+c_n-b_n+t,c_2,\dots,c_n) , & b_n-c_n+c_2-b_2 \leq t \leq b_n-c_n+c_1-b_1
\end{cases}
$$
\end{small}
In spite of its complexity, it can be verified that the tropical distance from the vertex $a$ to $\gamma_{bc}(t)$ is consistently given by
$$\dtr(a,\, \gamma_{bc}(t)) = b_1-a_1 + a_n - b_n + t =  d_e(a'\,, \gamma'{bc}(t)) $$
for $0\leq t \leq b_n-c_n+c_1-b_1$. Analogously, $\dtr(c,\, \gamma_{ab}(t)) = d_e(c'\,, \gamma'{ab}(t))$ so it remains to compare the distance from $b$ to $\gamma_{ac}(t)$. The latter has the same parametrization as above just replacing the role of $b$ by $a$. With this we have
\begin{small}
$$\dtr(b,\, \gamma_{ac}(t)) = 
\begin{cases}
b_1-a_1-(b_n-a_n+t) & 0 \leq t \leq a_n-b_n;\\
b_1-a_1 & a_n -b_n \leq t \leq \min\{a_n-c_n, b_1-a_1+a_n-b_n\};\\
\vdots & \vdots \\
c_2-b_2+b_n-c_n &  a_n-c_n+c_2-a_2 \leq t  \\ 
a_1-a_n-b_1+b_n+t & c_2-b_2+b_1-a_1+a_n-c_n \leq t \leq a_n-c_n+c_1-a_1 \\
\end{cases}
$$
\end{small}
where the exact expressions in the middle part depend on whether additional inequalities hold (see the proof of Theorem~\ref{thm:flat_is_fat}). However, we do not need to know the middle behavior to conclude that one distance function is always greater than the other. Indeed, note that at the beginning of the segment and at the end of the segment they coincide, and then the tropical distance function becomes constant in the adjacent segments, while the euclidean distance function $d_e(b',\gamma'_{ac}(t)) = |b_1-a_1+a_n-b_n-t|$ keeps decreasing linearly towards 0. Since the tropical distance can also move at most at a linear rate, this means that inside the segment it remains strictly larger than the euclidean one. Hence,
$$\dtr(b,\, \gamma_{ac}(t)) > d_e(c'\,, \gamma'{ab}(t)) $$ holds for all $0 \leq t \leq a_n-c_n+c_1-a_1$
and we can conclude positive curvature.
\end{proof}

While it was possible to generalize the triangle type T3 to higher dimensions as done in Theorem \ref{thm:flat_is_fat_n}, in general it is quite difficult to study triangle types in higher dimensions.  This is because the number of combinatorial types of triangles quickly explodes even for relatively low dimensions: in four dimensions, the number of combinatorial types of triangle grows to 35; in five dimensions, it is 530; and in six dimensions, the number explodes to 13,621; in higher dimensions, the number of combinatorial types is unknown \citep{maclagan2015introduction}.  In particular, a general formula for the number of combinatorial types of tropical triangles for an arbitrary $n$ is currently unknown.

We were, however, able to study the curvature behavior by numerical experimentation in higher dimensions: in particular, we found higher dimensional examples exhibiting all the possible curvature behaviors. To understand this better, we implemented Algorithm 1 of \cite{yoshida2022tree} to compute tropical line segments in $n$ dimensions. This allows us to extend our curvature computations to any triangle in $a,b,c \in \R^n/\R\one$. For several values of $n$, we sampled 1000 random triangles uniformly from the $n-1$ dimensional simplex, and computed the proportion of flat, positive, negative and undefined curvature. We repeated this experiment ten times for each dimension. The results are presented in Table \ref{tab:experiments_n}.

\begin{table}
\begin{center}
\caption{Percentage of sampled triangles in $\R^n/\R\one$ according to their Alexandrov curvature}
\label{tab:experiments_n}       
\begin{tabular}{c|cccccc}
\hline\noalign{\smallskip}
$n$ & 3 & 4 & 5 & 6 & 7 & 8 \\
\noalign{\smallskip}\hline\noalign{\smallskip}
$0$ & 0 & 0 & 0 & 0 & 0 & 0 \\
$+$ & 26.3 & 10.9 & 6.1 & 3.5& 2.1& 1.2\\
$-$ & 10.8 & 6.4 & 5.4 & 4.7 & 3.9 & 3.4\\
$\nexists$ & 62.9 & 82.7 & 88.5 & 91.8 & 94 & 95.4 \\
\noalign{\smallskip}\hline
\end{tabular}
\end{center}
\end{table}

We observe that the proportions of positively and negatively curved triangles decrease with $n$, while the proportion of triangles with undefined curvature steadily increases to conform the vast majority of the space. On the other hand, no flat triangles were sampled, consistent with their expected probability zero.

\subsection{Curvature in Ultrametric Tree Space}

We now present an example of Alexandrov curvature computed on sets of phylogenetic trees, which was the original motivation of our study (see Section \ref{sec:motivation}).  Assessing the curvature behavior on the space of phylogenetic trees will determine whether Sturm's algorithm to compute Fr\'{e}chet means will be applicable or not.

Briefly, a tree is an acyclic connected graph $T = (V,E)$ defined by a set of vertices $V$ and a set of edges $E$.  An $N$-tree is a tree with $N$ labeled terminal nodes called leaves.  A metric $N$-tree is a tree with zero or positive lengths on all its edges; metric $N$-trees are also called {\em phylogenetic trees}.  {\em Ultrametric trees} are special cases of phylogenetic trees where the distance from the unique root to a leaf is the same among all leaves; ultrametric trees are also known as equidistant trees.  The set of all ultrametric trees is a proper subset of the space of phylogenetic trees and therefore also contained within the tropical projective torus; e.g., \cite{monod2018tropical,lin2022tropical}.  An ultrametric tree is represented as a vector in $\R^{\binom{N}{2}}$, where each entry in the vector is the total distance between leaf $i$ and leaf $j$, given by summing all the branch lengths along the unique path between the two leaves.

We simulated sets of ultrametric trees with branch lengths only slightly perturbed.  Our reasoning to do so relates back to our original motivation of doing statistics, as discussed above in Section \ref{sec:motivation}: ultimately, we would like to do statistics with sets of phylogenetic trees and in particular, we would like to compute and study Fr\'{e}chet means of sets of phylogenetic trees.  An important reason for studying Fr\'{e}chet means, aside from it being a fundamental statistic, is that in the current literature, means of sets of phylogenetic trees are known to behave poorly.  Specifically, means of sets of phylogenetic trees computed using the current standard are known to be {\em sticky}, meaning that different sets of different input trees results in no change in the mean \citep{10.1214/12-AAP899}.  This property is prohibitive for parametric statistical inference: it means that classically-derived probabilistic central limit theorems and, more generally, null distributions cannot be derived for classical parametric statistical inference such as hypothesis testing.  This is a fundamental difficulty in doing classical statistics on the space of phylogenetic trees.

We simulated sets of 480 trees with 4 leaves so each tree is a vector in $\R^6$, and randomly selected three trees as vertices for our triangle.  We then drew the tropical line segments between them and computed the Alexandrov curvature.  We repeated this for a large number of triangles and computed the occurrence for each type of curvature we saw as a percentage of the number of triangles sampled (flat, skinny, fat, and undefined).  We repeated this experiment twice, once for 100 sampled triangles and once for 300 sampled triangles; the results are summarized in Table \ref{tab:trees}.

\begin{table}
\begin{center}
\caption{Alexandrov Curvature for Sets of Ultrametric Trees}
\label{tab:trees}       
\begin{tabular}{lllll}
\hline\noalign{\smallskip}
Sample Size & Flat & Skinny & Fat & Undefined \\
\noalign{\smallskip}\hline\noalign{\smallskip}
100 & 0.07 & 0 & 0.2 & 0.73 \\
300 & 0.1017544 & 0 & 0.1824561 & 0.7157895 \\
\noalign{\smallskip}\hline
\end{tabular}
\end{center}
\end{table}

Here, we see that there are never any skinny triangles, and most triangles have undefined curvature.  There is a significant proportion of fat triangles and some flat triangles.  This indicates that almost certainly, the classical Sturm's algorithm for computing Fr\'{e}chet means that assumes the underlying metric space is nonpositively curved will not be applicable and new techniques will need to be developed.  Moreover, knowing the frequent occurrence of undefined curvature triangle types means that any new method developed to compute Fr\'{e}chet means cannot rely on curvature, as Sturm's algorithm does.


\section*{Software}

\texttt{R} code to implement all numerical experiments presented in this paper is publicly available
and located on the Tropical Alexandrov GitHub repository at \url{https://github.com/antheamonod/TropAlex}.


\section{Discussion}
\label{sec:discussion}

In this paper, we studied Alexandrov curvature in $\R^n/\R\one$ with respect to the tropical metric and find wildly varying curvature behavior.  Since Alexandrov curvature is defined in terms of triangles, given that there are five combinatorial types of tropical triangles in the plane, we explored the curvature with respect to each of the five types and find that there exists skinny, fat, flat, and undefined curvature behavior.  We established that in general, Alexandrov curvature is never well-defined for triangle types T2 and T4, while triangle type T3 is always fat in the plane and in higher dimensions.  By way of numerical experiments, we found that type T1 triangles are very often skinny, but can also be undefined, and that type T5 triangles are often undefined but can also be fat.  Our study is novel because it is the first study of Alexandrov curvature in troipcal geometry, and it is also the first study of Alexandrov curvature on combinatorial triangles.

Our results indicate that any computational or statistical methodology to be developed in the tropical projective torus cannot rely on curvature.  This is particularly relevant for statistical studies on the space of phylogenetic trees, which are contained within the tropical projective torus, and where current methodology for statistics does rely on the curvature of the space.  

In addition, our findings are interesting because they show surprising and unexpected geometric behavior, signifying that there is an intricate underlying geometry to the tropical projective torus, despite the fact that it is a well-behaved metric measure space with desirable properties for probability and statistics.  To the best of our knowledge, this is the first occurrence of undefined Alexandrov curvature in such a well-behaved space.  This indicates that there is more to be explored and understood about the tropical projective torus, and invites a study of exactly what are the driving factors influencing the curvature behavior.  This is a complex and challenging problem, because in spite of the polyhedral and combinatorial properties of tropical geometry, there is nevertheless a lack of symmetry and a clear pattern to the behavior.  For example, in the plane, we see that the five types of tropical triangles are defined by the region enclosed by the tropical line segments, and we find that there is one triangle, two quadrangles, one pentagon and one hexagon (see Figure \ref{fig:5types}).  Additionally, as discussed previously, there are various cases for each triangle type: two for each triangle type except for type T2, where there is only one case, and for type T5, where there are six.  This suggests that in order to obtain a complete characterization, each triangle type and case needs to be studied individually, which is difficult given that the number of combinatorial types in an arbitrary dimension is currently unknown.

Our results inspire various future directions for research.  In addition to those that have already been raised throughout the paper, an important question that still remains open is a complete characterization of the regions of an arbitrary tropical triangle that gives rise to each curvature type (positive, negative, zero, and undefined).  Additionally, we would like to determine the ``sizes'' of these regions within the tropical projective torus and what they look like, in terms of their geometry. Our work, including results and simulations, helps to understand the intricate behavior of Alexandrov curvature, but a full description still needs to be achieved.

From a statistical and probabilistic viewpoint, we would also like to understand how probability measures influence the proportions of observed triangle types and curvature behavior. As we have seen in our work, the distribution plays an important role, since we observe the appearance of flat triangles via uniform sampling with integer coordinates.  A particular convergence question we would like to fully understand is the tendency towards triangles of undefined curvature as the dimension $n$ of the tropical projective torus grows; we would also like to be able to quantify this tendency.


\section*{Acknowledgments}

We would like to thank Yueqi Cao, Stephan Huckemann, Bo Lin, and Ruriko Yoshida for discussions on our work.  We are particularly grateful to Emil Saucan for his many helpful insights on curvature and to Beatrice Matteo for her help with the simulations presented.  We also wish to thank anonymous reviewers whose helpful comments have greatly improved our work.








\bibliographystyle{chicago}  
\bibliography{Birthday_ref} 


\end{document}